\definecolor{hw}{rgb}{0,0,0}
\numberwithin{equation}{section}
\numberwithin{figure}{section}
\numberwithin{table}{section}
\newcommand{\R}{{\mathbb R}}
\newcommand{\Sp}{{\mathbb S}}
\newcommand{\ds}{\displaystyle}
\newcommand{\no}{\nonumber}
\newcommand{\be}{\begin{eqnarray}}
\newcommand{\ben}{\begin{eqnarray*}}
\newcommand{\en}{\end{eqnarray}}
\newcommand{\enn}{\end{eqnarray*}}
\newcommand{\ba}{\backslash}
\newcommand{\pa}{\partial}
\newcommand{\ov}{\overline}
\newcommand{\se}{\setminus}
\newcommand{\real}{{\rm Re\,}}
\newtheorem{theorem}{Theorem}[section]
\newtheorem{lemma}[theorem]{Lemma}
\newtheorem{remark}[theorem]{Remark}
\begin{document}
\renewcommand{\theequation}{\arabic{section}.\arabic{equation}}

\begin{titlepage}
\title{\bf 
Imaging of buried obstacles in a two-layered medium with phaseless far-field data}
%
\author{Long Li\thanks{LMAM, School of Mathematical Sciences, Peking University, Beijing 100871,
China ({\tt 1601110046@pku.edu.cn})}
\and
{Jiansheng} Yang\thanks{LMAM, School of Mathematical Sciences, Peking University, Beijing 100871,
China ({\tt jsyang@pku.edu.cn})}
\and
Bo Zhang\thanks{NCMIS, LSEC and Academy of Mathematics and Systems Science, Chinese Academy of
Sciences, Beijing 100190, China and School of Mathematical Sciences, University of Chinese
Academy of Sciences, Beijing 100049, China ({\tt b.zhang@amt.ac.cn})}
\and
Haiwen Zhang\thanks{Corresponding author. NCMIS and Academy of Mathematics and Systems Science, Chinese Academy of Sciences,
Beijing 100190, China ({\tt zhanghaiwen@amss.ac.cn})}
}
\date{}
\end{titlepage}
\maketitle

\begin{abstract}
The inverse problem we consider is to reconstruct the location and shape of buried obstacles in
the lower half-space of an unbounded two-layered medium in two dimensions from phaseless far-field data.
A main difficulty of this problem is that the translation invariance property of the modulus of
the far field pattern is unavoidable, which is similar to the homogenous background medium case.
Based on the idea of using superpositions of two plane waves with different directions as the incident
fields, we first develop a direct imaging method to locate the position of small anomalies and give
a theoretical analysis of the algorithm. Then a recursive Newton-type iteration algorithm in frequencies
is proposed to reconstruct extended obstacles.
Finally, numerical experiments are presented to illustrate the feasibility of our algorithms.

\vspace{.2in}
{\bf Keywords}: Two-layered medium, buried obstacle, phaseless far-field data, direct imaging method,
recursive Newton-type iteration algorithm
\end{abstract}

\section{Introduction and main results}\label{sec:introduction}

In this paper, we consider the inverse scattering by obstacles buried in a two-layered medium
separated by a flat plane and filled with different homogeneous materials,
which is essential to a broad spectrum of science and technology disciplines like geophysics,
underwater acoustics, and obstacle imaging in ocean environments.
For simplicity, we will focus our attention on the two-dimensional case.

Let $\R^2_{-}=\{{(x_1,x_2)}\in\R^2:x_2<0\}$ and $\R^2_{+}=\{{(x_1,x_2)}\in\R^2:x_2>0\}$ denote
the lower and upper half-spaces, respectively. The interface between the two layers is
denoted by $\Gamma=\{{(x_1,x_2)}\in\R^2:x_2=0\}$. We assume that the scattering obstacle $D$,
described by a bounded domain with {a connected complement}, is fully embedded in the
lower half-space $\R^{2}_{-}$.

Consider the incident wave $u^i=u^{i}(x,d,k_{+}):=e^{ik_{+}x\cdot d}$ propagating in the direction
\be\label{eq9}
d=(\cos\theta_d,\sin\theta_d), \quad \theta_d\in [\pi+\theta_c, 2\pi-\theta_c],
\en
where $\theta_c\in[0,\pi)$ is defined as
\ben
\theta_c :=\begin{cases}
\arccos({k_-}/{k_+}),&\quad k_{+}>k_{-},\\
0,  &\quad k_{+}<k_{-},\\
\end{cases}
\enn
with $k_\pm={\omega}/{c_\pm}>0$ being the wave numbers in $\R^2_\pm$, respectively.
Here,
$\omega$ is the wave frequency and $c_\pm$ are the wave speeds in the half-spaces $\R_\pm^2$,
respectively.
Further, the wave numbers $k_+$ and $k_-$ satisfy $k^2_-=nk^2_+$ with $n$ being the refractive
index.
The scattering problem in the two-layered medium is to find the total field $u= u^0+u^s$.
From the Fresnel formula, $u^0$ is given by
\ben
u^0(x,d,k_+,k_-)=\begin{cases}
\ds e^{ik_{+}x\cdot d}+ u^r(x,d,k_+,k_-),\quad & x\in\R^2_{+},\\
\ds u^t(x,d,k_+,k_-), \quad & x\in\R^2_{-},\\
\end{cases}
\enn
with
\ben
u^r(x,d,k_+,k_-):=R(\theta_d)e^{ik_{+}x\cdot d^r},\quad
u^t(x,d,k_+,k_-):=T(\theta_d)e^{ik_{-}x\cdot d^{t} },
\enn
where $d^{r}=(\cos\theta_d,-\sin\theta_d)$ is the reflection direction,
$d^{t}=(\cos\theta_d^t,\sin\theta_d^{t})$ is the transmission direction with
$\theta_d^{t}\in [\pi,2\pi]$ satisfying that $k_{+}\cos\theta_d=k_{-}\cos\theta_d^{t}$,
and the reflection and transmission coefficients $R(\theta_d)$ and $T(\theta_d)$ are given by
\be\label{eq:0}
{R(\theta_d)=\frac{k_{+}\sin\theta_d-k_{-}\sin\theta_d^t}{k_{+}\sin\theta_d+k_{-}{\sin\theta_d^t}},\;\;\;\;}
{T(\theta_d)=\frac{2k_{+}\sin\theta_d}{k_{+}\sin\theta_d+k_{-}\sin\theta_d^t}},
\en
respectively. The scattered wave $u^s$ produced by the interaction of $u^0$ with a sound-soft obstacle in presence
of the layered medium satisfies that
\be\label{eq:1}
\begin{cases}
\ds\Delta u^s +{k}^2u^{s}=0 \quad\quad\quad & \text{in}\quad\R^2\se(\ov{D}\cup\Gamma), \\
\ds[u^s]=0,\;\;\left[{\partial u^s}/{\partial\nu}\right]=0\quad\quad\quad &\text{on}\quad\Gamma,\\
\ds u^{s}=f\quad\quad\quad\quad\quad\quad \quad & \text{on}\quad\partial D,\\
{\ds
\lim_{r\rightarrow\infty}
\int_{\Sp^1_r}\left|\frac{\pa u^s}{\pa r}-ik u^s\right|^2ds=0},
&r=|x|,\quad x \in \R^2,\\
\end{cases}
\en
with $f=-{u^{0}}|_{\pa D}$,
where $\nu$ is the
unit normal vector on $\Gamma$ directed into $\R^2_+$,
$[\cdot]$ denotes the jump across the interface $\Gamma$,
$\Sp^1_r:=\{x \in\R^2 :|x|=r\}$ denotes the circle of radius $r$ centered at the origin
and the wave number $k$ is defined by
\ben
k=\begin{cases}
&k_{+}, \quad x\in\R^2_{+} ,\\
&k_{-}, \quad x\in\R^2_{-} .
\end{cases}
\enn
The well-posedness of the boundary value problem (\ref{eq:1}) can be obtained by employing a similar argument
as in \cite{CH98} where the case of
electromagnetic scattering has been considered.
In particular, it can be obtained
that the scattered wave $u^s$ has the asymptotic behavior \cite{AIL05}
\be\label{asymp}
u^{s}(x)=\frac{e^{ik_{+}|x|}}{\sqrt{|x|}}u^{\infty}(\hat x)
+o\left(\frac{1}{\sqrt{|x|}}\right),\quad |x|\rightarrow\infty,
\en
for all directions $\hat x={x}/{|x|}\in\Sp^1_+$, where $\Sp^1_+:=\{x = (x_1, x_2) :|x|=1,x_2 > 0\}$ denotes the upper unit half-circle
and $u^{\infty}(\hat x)$, defined on $\Sp^1_{+}$, is called the far-field pattern of the scattered wave $u^s$. {In the
present paper, given the incident wave $u^{i}(x,d,k_{+})=e^{ik_{+}x\cdot d}$, the corresponding scattered wave and far-field pattern are denoted by
$u^s(x,d,k_{+},k_{-})$ and $u^{\infty}(\hat x, d, k_{+},k_{-})$, respectively.}
{See Figure \ref{f1.1} for the problem geometry.}

\begin{figure}}
\centering
\includegraphics[width=0.65\textwidth]{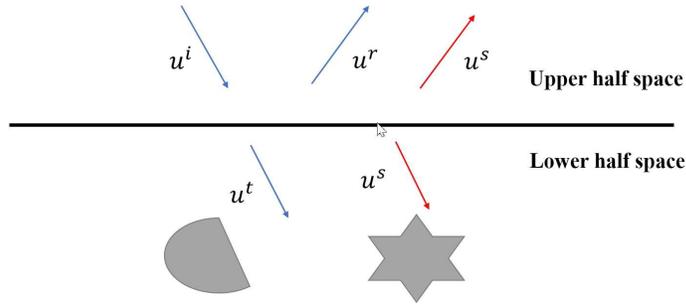}
\caption{The scattering problem by obstacles buried in a two-layered medium}{\label{f1.1}
\end{figure}
In this paper, we are concerned with the inverse scattering problem of recovering the buried obstacle
by wave detection made in the upper half-space.
This is a difficult problem due to the fact that the problem is both nonlinear and severely ill-posed,
which is a typical feature in inverse scattering problems. Various inversion algorithms have been developed
to tackle the nonlinearity and ill-posedness, being divided into two types of solution strategies:
iteration methods and non-iterative methods. Iteration methods usually make use of nonlinear constrained
optimization techniques with suitably choosing regularization terms to tackle the ill-posedness
(see, e.g., \cite{TH99,Hohage_1998,H98,Zhang_2013}).
Non-iterative methods usually deal with the nonlinearity property without using iteration.
Typical examples of non-iterative methods include the linear sampling method \cite{CK96,FD14},
the factorization method \cite{Kirsch_1998}, the method of topological derivatives \cite{A12,Bellis_2013}
as well as the MUSIC-type methods \cite{Cheney01,Kirsch02}. Recently, a new class of sampling methods
called direct imaging methods \cite{P10,Chen_2013,CHHA,Liu_2017} has been widely studied,
which is fast and highly robust to noises.

However, in many practical applications, the phase information of the far-field pattern is difficult and
sometimes impossible {to be measured}, and so only the intensity of the far-field pattern
(called the phaseless far-field data) is available.
Thus, in this paper, we restrict our attention to numerical methods for recovering the scattering obstacle $D$
embedded in the two-layered medium from the phaseless far-field data. {Inverse scattering with phaseless far-field pattern} is more
difficult than inverse scattering with full far-field data because of the translation invariance property
of the far-field pattern (see Lemma \ref{le:1} below) which makes it impossible to locate the position of the scattering obstacle.
Recently, the idea of using a superposition of two different plane waves to be the incident {field}
proposed in \cite{BLS} can effectively deal with the translation invariance of the far-field pattern.
Based on this idea, \cite{BLS} proposed a Newton method using multi-frequency measured data,
whilst \cite{RW} developed a direct imaging method to reconstruct the shape and location of the obstacle
without knowing the type of boundary conditions in a homogenous background medium.
Further, uniqueness can be guaranteed rigorously for recovering obstacles or inhomogeneous {media}
from the intensity of the far-field pattern generated by superpositions of two distinct plane waves
(see \cite{XuX2018} with certain conditions on the scatterers and \cite{XuX2018b} without
any condition on the scatterers but with a reference ball in the scattering system).
In addition, other solution strategies have been proposed to solve inverse scattering problems
with phaseless data numerically, such as an iteration method for only shape reconstruction \cite{I1,IK},
a direct imaging method based on a reference ball technique \cite{JLZ19a} and a direct imaging method
based on the reverse time migration technique \cite{Chen1,CFH17}.
For more results on phaseless inverse scattering problems including the uniqueness issue and other models,
see \cite{Dong19a,Dong19b,JLZ19b,JLZ19c,KM1,KM2,No2,No1,XuX2018,xzz19,G1}.

A large number of contributions exist to deal with the inverse problems in a two-layered background medium.
A MUSIC-type algorithm was first studied in \cite{AIL05} to determine the number and locations of small
inclusions buried in the lower half-space. An improved MUSIC-type method with multiple frequencies
was developed in \cite{Park_2010} to image thin inclusions. A direct imaging method was studied
in \cite{Li_2015} to recover multi-scale buried anomalies, which can locate small inclusions accurately
but needs a strong  {a priori} condition for recovering the shape of extended obstacles.
An asymptotic factorization method has been considered in \cite{G08} in the electromagnetic case.
These methods are robust to noises but require some {a priori} information on the small inclusions.
Other work on inverse scattering by extended obstacles can be found
in \cite{Coyle_2000,Gebauer_2005,K07,Delbary_2007}, where sampling-type methods and iteration-type methods
with phased near-field data have been studied for the layered-medium case.
The uniqueness issue was considered in \cite{LZ1} for the case of electromagnetic waves.

To the best of our knowledge, this paper is the first attempt to design an imaging algorithm with the
intensity of the far-field pattern to recover the location and shape of an obstacle in a two-layered
background medium. Since the translation invariance property is unavoidable, we follow the idea
of \cite{XuX2018,BLS,RW} and consider the incident wave $u^{i}=e^{ik_{+}x\cdot d_1}+e^{ik_{+}x\cdot d_2}$
with the incident directions $d_1,d_2$ satisfying (\ref{eq9}).
We first extend the direct imaging method in \cite{RW} from the homogenous background case to the
two-layered medium case to locate multiple small anomalies
with the intensity of the far-field pattern measured on the upper half-space.
A theoretical analysis of the imaging algorithm is then provided by using the {theory of oscillatory integrals}.
As shown by the results of numerical simulations, the direct imaging algorithm can accurately
and effectively determine the number and location of small scatterers.
However, the imaging algorithm gives poor shape reconstruction results for extended obstacles,
due to the limited aperture measurement data caused by refraction and reflection on the interface $\Gamma$.
Therefore, in order to obtain better results for the shape reconstruction of an extended obstacle embedded in the lower
half-space from the phaseless far-field data, we combine our direct imaging algorithm with
the recursive Newton iteration method developed in \cite{BLS}. Precisely,
the initial guess of the Newton iteration method is chosen with the help of the imaging results
obtained with the direct imaging algorithm. With such an initial guess, the recursive Newton
iteration method can satisfactorily and effectively recover the location and shape of the
extended obstacles, as illustrated by the numerical results.

The paper is organized as follows. In Section \ref{sec2}, we introduce the direct imaging method
for locating multiple small anomalies and give a theoretical analysis of this method
by the {theory of oscillatory integrals}.
In Section \ref{sec3}, a recursive Newton iteration method with multi-frequency phaseless far-field
data is developed to recover both the location and the shape of the extended obstacles buried in
the lower half-space. Section \ref{sec4} is devoted to the numerical experiments to illustrate
the good performance of the iterative method. {Finally, we will give some concluding remarks in Section \ref{sec5}}.

\section{Locating multiple small anomalies}\label{sec2}\setcounter{equation}{0}
In this section, we consider the inverse problem for determining the location of multiple small anomalies.
For this aim, we introduce the following notations.
Let $\Omega_j\subset\R^2_{-}$, $j=1,\ldots,Q$, be a family of base scatterers such that each $\Omega_j$
is simply connected and has a connected complement. Suppose that
$D=\cup^{Q}_{j=1}{D}_j\subset\R^2_{-}$
represents the multiple disjoint small scatterers, where $D_j=z_j+\rho{\Omega}_j,j=1,\ldots,Q$,
$\rho >0,\rho\ll 1$ and $L=\min_{1\le j,j^\prime\le Q, j\ne j^\prime}|z_j-z_{j^\prime}|\gg 1$.
Further, we also need the following notations for the rest of this paper.
For any $d$, $\hat{x}$ belonging to the unit circle $\Sp^1$,
let $d:=(\cos\theta_d,\sin\theta_d)$, $\hat{x}:=(\cos\theta_{\hat{x}},\sin\theta_{\hat{x}})$
with $\theta_d,\theta_{\hat{x}}\in[0,2\pi]$.
In particular,
for $d \in\Sp^{-}_{\theta_c}:=\{d=(\cos\theta_d,\sin\theta_d):\theta_d\in[\pi+\theta_c,2\pi-\theta_c]\}$, we define $d^t:= (\cos\theta_d^t,\sin \theta_d^t)$ and $T(\theta_d):= {2k_{+}\sin\theta_{d}}/{(k_+\sin\theta_{d}+k_-\sin\theta_{d}^t)}$, where $\theta_d$ and $\theta_d^t$ satisfy the relation
\be
\label{eq:2}
k_{+}\cos \theta_d = k_{-}\cos \theta_d^t, \quad {\theta^t_d}\in[\pi,2\pi].
\en
And for $\hat x \in\Sp^{+}_{\theta_c}:=\{\hat x =(\cos\theta_{\hat x},\sin\theta_{\hat x}):\theta_{\hat x}\in[\theta_c,\pi-\theta_c]\}$, we denote $\hat{x}^t:=(\cos\theta^t_{\hat x},\sin\theta^t_{\hat x})$ and $T(\theta_{\hat x}):= {2k_{+}\sin\theta_{\hat x}}/{(k_+\sin\theta_{\hat x}+k_-\sin\theta_{\hat x}^t)}$, where  $\theta^{t}_{\hat x}$ is defined by the relation
\be
\label{eq:3}
k_+\cos\theta_{\hat x}= k_-\cos\theta^t_{\hat x},\quad \theta^t_{\hat x}\in[0,\pi].
\en
Throughout this paper, the positive constants may be different
at different places.

We first stress that the translation invariance property of the phaseless far-field data is
inevitable in the two-layered background medium case, as stated by the following lemma.

\begin{lemma}\label{le:1}
Define $D_{z}:=D+z$ with $z=(z_1,0),\; z_1\in \R$.
For the incident wave $u^i(x,d,k_+)=\textcolor {red}{e^{ik_{+}x\cdot d}}$ with $d\in \Sp^{-}_{\theta_c}$,
the scattered waves $u^{\infty}(\cdot,d,k_+,k_-,D)$ and $u^{\infty}(\cdot,d,k_+,k_-,D_z)$ associated with
the obstacles $D$ and $D_z$, respectively, satisfy that
\be\label{eq:4.1}
u^{\infty}(\hat x,d,k_+,k_-,D_z)=e^{ik_{-}(z-\hat x)\cdot{{d}^t}}u^{\infty}(\hat x,d,k_+,k_-,D),\;\;\; \hat x\in \Sp^{+}_{\theta_c}.
\en
\end{lemma}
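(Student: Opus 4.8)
The plan is to reduce the statement about $D_z$ to the already-solved forward problem for $D$ by exploiting the fact that the translation vector $z=(z_1,0)$ is parallel to the interface $\Gamma$. The horizontal shift $x\mapsto x-z$ maps each of $\R^2_+$ and $\R^2_-$ onto itself, maps $\Gamma$ onto itself, and maps $D_z$ onto $D$; the only quantity that is not invariant under this shift is the background field $u^0$. Since $D_z\subset\R^2_-$, on the lower half-space one has $u^0(x-z)=T(\theta_d)e^{ik_-(x-z)\cdot d^t}=e^{-ik_-z\cdot d^t}u^0(x)$, so the shift merely multiplies $u^0$ by a constant phase.

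First I would set $\tilde u(x):=e^{ik_-z\cdot d^t}\,u^s(x-z,d,k_+,k_-,D)$ and verify that $\tilde u$ solves the scattering problem (\ref{eq:1}) associated with the obstacle $D_z$. Because $z$ is horizontal, $\tilde u$ satisfies the Helmholtz equation with the correct wavenumber in each half-space, both transmission conditions on $\Gamma$ (a horizontal shift commutes with the vertical normal derivative, so it preserves both $[\cdot]=0$ conditions), and the Sommerfeld radiation condition. On $\partial D_z$, writing $x=w+z$ with $w\in\partial D$ and using $u^s=-u^0$ on $\partial D$ together with the phase relation above, a direct check gives $\tilde u(x)=e^{ik_-z\cdot d^t}u^s(w)=-e^{ik_-z\cdot d^t}u^0(x-z)=-u^0(x)$, i.e. $\tilde u=-u^0$ on $\partial D_z$. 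Hence $\tilde u$ and $u^s(\cdot,d,k_+,k_-,D_z)$ solve the same well-posed boundary value problem, and by uniqueness $u^s(x,d,k_+,k_-,D_z)=e^{ik_-z\cdot d^t}\,u^s(x-z,d,k_+,k_-,D)$.

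Next I would pass to the far field. For $\hat x\in\Sp^+_{\theta_c}$ and $x=|x|\hat x\to\infty$, both $x$ and $x-z$ tend to infinity inside $\R^2_+$, so the asymptotics (\ref{asymp}) apply to $u^s(x-z)$; expanding $|x-z|=|x|-\hat x\cdot z+O(1/|x|)$ and $\widehat{x-z}\to\hat x$ yields the far field of $u^s(\cdot-z)$ as $e^{-ik_+\hat x\cdot z}u^\infty(\hat x,d,k_+,k_-,D)$. Combining with the factor $e^{ik_-z\cdot d^t}$ from the previous step gives the observation-direction phase $e^{ik_-z\cdot d^t-ik_+\hat x\cdot z}$. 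Finally, using $z=(z_1,0)$ together with the transmission relation (\ref{eq:3}) to rewrite $k_+\hat x\cdot z=k_+z_1\cos\theta_{\hat x}=k_-z_1\cos\theta^t_{\hat x}=k_-z\cdot\hat x^t$ converts the $k_+$-phase in the observation direction into a $k_-$-phase, and the exponent collapses to the phase factor asserted in (\ref{eq:4.1}).

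The main obstacle, and the step deserving the most care, is the verification that $\tilde u$ satisfies the interface conditions on $\Gamma$ and the correct Dirichlet data on $\partial D_z$, since this is precisely where the horizontality of $z$ and the two distinct wavenumbers $k_\pm$ enter; once the identity $u^s(\cdot,D_z)=e^{ik_-z\cdot d^t}u^s(\cdot-z,D)$ is established, the far-field computation is routine. The only other point requiring attention is the bookkeeping of the two wavenumbers in the final phase simplification, converting the $k_+$ observation-direction phase into its $k_-$ transmitted form through (\ref{eq:3}).
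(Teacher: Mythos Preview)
Your argument is the standard translation-by-uniqueness approach and is presumably what the reference \cite{Li_2015} does; the paper itself gives no details beyond that citation, so there is nothing to compare at the level of method.

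One point deserves a flag. The phase you actually derive is
\[
e^{ik_-z\cdot d^t-ik_+\hat x\cdot z}=e^{ik_-z\cdot(d^t-\hat x^t)}
\qquad\bigl(=e^{ik_+(d-\hat x)\cdot z}\bigr),
\]
which is the correct translation relation. This does \emph{not} literally coincide with the expression $e^{ik_-(z-\hat x)\cdot d^t}$ printed in (\ref{eq:4.1}): your second term is $k_-z\cdot\hat x^t$, whereas the paper's second term is $k_-\hat x\cdot d^t$, and these differ in general (indeed $z-\hat x$ mixes a length-scale vector with a unit vector, which already signals a misprint). So your closing sentence ``the exponent collapses to the phase factor asserted in (\ref{eq:4.1})'' is not accurate as stated; the phase you obtain is the right one, and the formula in the lemma appears to contain a typographical error. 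Since the lemma is used only through the fact that the modulus of the far field is translation-invariant, and your phase has modulus one, the discrepancy is harmless for everything downstream, but you should say so explicitly rather than claim agreement with (\ref{eq:4.1}).
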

\begin{proof}
The proof of this Lemma is similar to that of Lemma \ref{le:1} in \cite{Li_2015}.
\end{proof}

By Lemma \ref{le:1}, we see that it is impossible to determine the location of the obstacle
using the modulus of the far-field pattern with only one incident plane wave. Therefore,
following the idea of \cite{RW}, we use the following superposition of two plane waves
as the incident field:
\ben
{\color{hw}{u^{i}(x,d_1,d_2,k_{+}):=u^{i}(x,d_1,k_{+})+u^{i}(x,d_2,k_{+})= e^{ik_{+}x\cdot d_1}+e^{ik_{+} x \cdot d_2}}}
\enn
with the incident directions $d_1,d_2\in\Sp^-_{\theta_c}$.
Then, by (\ref{asymp}) the corresponding scattered wave {$u^s(x,d_1,d_2,k_{+},k_{-})=u^{s}(x,d_1,k_{+})+u^{s}(x,d_2,k_{+})$} has the asymptotic behavior
\ben
u^s(x,d_1,d_2,k_{+},k_{-})=\frac{e^{ik_{+}|x|}}{\sqrt{|x|}}
u^{\infty}(\hat x,d_1,d_2,k_{+},k_{-})
+o\left(\frac{1}{\sqrt{|x|}}\right),\quad |x|\rightarrow\infty,
\enn
for all directions $\hat{x}=x/|x|\in\Sp^+_{\theta_c}$. By the linear superposition principle
it is clear that
\be\label{lsp}
u^{\infty}(\hat x,d_1,d_2,k_{+},k_{-})=u^{\infty}(\hat x,d_1,k_{+},k_{-})
+ u^{\infty}(\hat x,d_2,k_{+},k_{-}).
\en
The inverse problem considered in this paper is to recover the obstacle $D$ from the phaseless
far-field data $|u^{\infty}(\hat{x},d_1,d_2,k_+,k_-)|$ for $\hat x \in \Sp^{+}_{\theta_c}$ and $d_1,d_2\in\Sp^{-}_{\theta_c}$.

The purpose of this section is to present a direct imaging method with the phaseless far-field data
to solve the inverse problem numerically. The imaging function for continuous data is given by
\begin{align}\label{eq:4.4}
I(z,k_{+},k_{-})
=&\int_{\Sp^+_{\theta_c}}\int_{\Sp^-_{\theta_c}}\int_{\Sp^-_{\theta_c}}
|u^{\infty}(\hat x,d_1,d_2,k_+,k_-)|^2T(\theta_{d_1})e^{-ik_{-}z\cdot{d_1^t}}T(\theta_{d_2})
e^{ik_{-}z\cdot{d_2^t}}ds(d_1)ds(d_2)ds(\hat x)\no\\
&-\int_{\Sp^-_{\theta_c}}T(\theta_{d})e^{ik_{-}z\cdot{d^t}}ds(d)
\int_{\Sp^+_{\theta_c}}\int_{\Sp^-_{\theta_c}}{|u^{\infty}(\hat{x},d,k_+,k_-)|}^2T(\theta_{d})
e^{-ik_{-}z\cdot{d^t}}ds(d)ds(\hat x)\no\\
&-\int_{\Sp^-_{\theta_c}}T(\theta_{d})e^{-ik_{-}z\cdot {d^t}}ds(d)
\int_{\Sp^+_{\theta_c}}\int_{\Sp^-_{\theta_c}}{|u^{\infty}(\hat{x},d,k_+,k_-)|}^2T(\theta_{d})
e^{ik_{-}z\cdot{d^t}}ds(d)ds(\hat{x}),
\end{align}
where $d_j=(\cos\theta_{d_j},\sin\theta_{d_j})$ and $d_j^t=(\cos\theta_{d_j}^t,\sin\theta_{d_j}^t)$ with
$\theta_{d_j}^t$ and $\theta_{d_j}$ satisfying (\ref{eq:2}), $j=1,2$.
It follows from (\ref{lsp}) that $|u^{\infty}(\hat{x},d,k_+,k_-)|^2=|u^{\infty}(\hat{x},d,d,k_+,k_-)|^2/4$
for $\hat x \in \Sp^{+}_{\theta_c}$ and $d\in\Sp^{-}_{\theta_c}$.

We now study the behavior of $I(z,k_{+},k_{-})$ for locating multiple small anomalies.
We start with the following lemma concerning $I(z,k_{+},k_{-})$.

\begin{theorem}\label{th:1}
$I(z,k_{+},k_{-})=I_1(z,k_{+},k_{-})+I_2(z,k_{+},k_{-})$, $z\in\R^2,$ where
\ben
I_1(z,k_{+},k_{-})&=&\int_{\Sp^{+}_{\theta_c}}|v^{\infty}(\hat x,z,k_{+},k_{-})|^2ds(\hat x),\\
I_2(z,k_{+},k_{-})&=&\int_{\Sp^{+}_{\theta_c}}|w^{\infty}(\hat x,z,k_{+},k_{-})|^2ds(\hat x).
\enn
Here,
\ben
v^{\infty}(\hat x,z,k_+,k_-)&:=&\int_{\Sp^-_{\theta_c}}u^{\infty}(\hat x,d,k_+,k_-)
T(\theta_{{d}})e^{-ik_{-}z\cdot {{d}^t}}ds(d),\\
w^{\infty}(\hat x,z,k_+,k_-)&:=&\int_{\Sp^-_{\theta_c}}u^{\infty}(\hat x,d,k_+,k_-)
T(\theta_{{d}})e^{ik_{-}z\cdot{{d}^t}}ds(d)
\enn
are the far-field patterns of the scattering solutions to the scattering problem
\be\label{eq:4.6}
\begin{cases}
\ds\Delta u^s +{k}^2u^{s}=0 \quad\quad\quad & {\emph{in}}\quad\R^2\se(\ov{D}\cup\Gamma), \\
\ds[u^s]=0,\;\;\left[{\partial u^s}/{\partial\nu}\right]=0\quad\quad\quad &{\emph{on}}\quad\Gamma,\\
\ds u^{s}=f_{z}\quad\quad\quad\quad\quad\quad \quad & {\emph{on}}\quad\partial D,\\
{\ds
\lim_{r\rightarrow\infty}
\int_{\Sp^1_r}\left|\frac{\pa u^s}{\pa r}-ik u^s\right|^2ds=0},
&r=|x|,\quad x \in \R^2,
\\
\end{cases}
\en
with the boundary data
\be
\label{eq1}
&&f_z(x)=-\int_{\Sp^-_{\theta_c}}T^2(\theta_d)e^{ik_{-}(x-z)\cdot{d}^t}ds(d),\;\;x\in\pa D,\\
\label{eq2}
&&f_z(x)=-\int_{\Sp^-_{\theta_c}}T^2(\theta_{d})e^{ik_{-}(x+z)\cdot{d^t}}ds(d),\;\;x\in\pa D,
\en
respectively.
\end{theorem}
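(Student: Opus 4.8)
The plan is to prove the identity in two stages: an algebraic reduction of the triple integral (\ref{eq:4.4}) via the linear superposition principle (\ref{lsp}), followed by a superposition argument that identifies $v^\infty$ and $w^\infty$ as far-field patterns of solutions to (\ref{eq:4.6}). To set up the first stage, I would abbreviate $u_1:=u^\infty(\hat x,d_1,k_+,k_-)$, $u_2:=u^\infty(\hat x,d_2,k_+,k_-)$, and
\[
A(d):=T(\theta_d)e^{-ik_-z\cdot d^t},\qquad B(d):=T(\theta_d)e^{ik_-z\cdot d^t}.
\]
Because $T(\theta_d)$ is real and $d^t$ is a genuine (real) direction for $d\in\Sp^-_{\theta_c}$, one has $\ov{A(d)}=B(d)$. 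By (\ref{lsp}), $u^\infty(\hat x,d_1,d_2)=u_1+u_2$, so
\[
|u^\infty(\hat x,d_1,d_2)|^2=|u_1|^2+|u_2|^2+u_1\ov{u_2}+\ov{u_1}u_2 .
\]
First I would substitute this expansion into the first term of (\ref{eq:4.4}) and split it into four integrals.

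The two diagonal pieces factorize. Integrating the $|u_1|^2$-piece over $d_2$ gives $\big(\int_{\Sp^-_{\theta_c}}B(d_2)\,ds(d_2)\big)\int_{\Sp^+_{\theta_c}}\int_{\Sp^-_{\theta_c}}|u_1|^2A(d_1)\,ds(d_1)\,ds(\hat x)$, which is precisely the negative of the second term of (\ref{eq:4.4}); hence they cancel. Symmetrically, the $|u_2|^2$-piece cancels the third term of (\ref{eq:4.4}). What survives is the sum of the two cross terms, and these I would match to $I_1$ and $I_2$. From the definition of $v^\infty$ together with $\ov{A}=B$, one has $|v^\infty(\hat x,z)|^2=\int_{\Sp^-_{\theta_c}}\int_{\Sp^-_{\theta_c}}u_1\ov{u_2}\,A(d_1)B(d_2)\,ds(d_1)\,ds(d_2)$, so integrating over $\hat x$ shows $I_1$ equals the first cross term. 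For the second cross term $\int\!\int\!\int \ov{u_1}u_2\,A(d_1)B(d_2)$, relabeling the dummy variables $d_1\leftrightarrow d_2$ turns it into $\int\!\int\!\int u_1\ov{u_2}\,B(d_1)A(d_2)$, which (using $\ov{B}=A$) is exactly $I_2=\int_{\Sp^+_{\theta_c}}|w^\infty|^2\,ds(\hat x)$. This yields $I=I_1+I_2$.

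For the second assertion I would use linearity of the direct problem. For a single incident wave $u^i(\cdot,d,k_+)$ the scattered field $u^s(\cdot,d)$ solves (\ref{eq:1}) with $u^s=-u^0|_{\partial D}=-T(\theta_d)e^{ik_-x\cdot d^t}$ on $\partial D$, since $D\subset\R^2_-$ and only the transmitted part $u^t$ of $u^0$ is relevant there. Setting $V^s(x,z):=\int_{\Sp^-_{\theta_c}}u^s(x,d)A(d)\,ds(d)$ and integrating the boundary value problem against $A(d)\,ds(d)$, the Helmholtz equation, the transmission conditions on $\Gamma$, and the radiation condition are all preserved (each is linear and commutes with integration over the compact set $\Sp^-_{\theta_c}$), while the Dirichlet data on $\partial D$ become $-\int_{\Sp^-_{\theta_c}}T^2(\theta_d)e^{ik_-(x-z)\cdot d^t}\,ds(d)$, i.e.\ (\ref{eq1}). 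Linearity of the far-field operator then gives the far-field pattern of $V^s$ as $\int_{\Sp^-_{\theta_c}}u^\infty(\hat x,d)A(d)\,ds(d)=v^\infty(\hat x,z)$. The argument for $w^\infty$ is identical with $A$ replaced by $B$, producing boundary data (\ref{eq2}).

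The computation is largely bookkeeping, so the points needing genuine care are: (i) the reality of $T(\theta_d)$ and of $d^t$ for $d\in\Sp^-_{\theta_c}$, which is what makes $\ov{A}=B$ and permits the clean pairing of conjugates; (ii) the interchange of the $\hat x$-, $d_1$-, and $d_2$-integrations, justified by continuity of $u^\infty$ and compactness of the integration domains; and (iii) the relabeling of dummy directions that identifies the second cross term with $I_2$. I expect the main obstacle to be the careful alignment of the cancellations in the first stage, since the sign conventions and the asymmetric roles of $A$ and $B$ in the second and third terms of (\ref{eq:4.4}) must be tracked precisely.
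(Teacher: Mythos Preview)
Your proposal is correct and follows essentially the same route as the paper: expand $|u^\infty(\hat x,d_1,d_2)|^2$ via (\ref{lsp}), cancel the diagonal terms $|u_j|^2$ against the second and third terms of (\ref{eq:4.4}), and recognize the two cross terms as $|v^\infty|^2$ and $|w^\infty|^2$ after integrating over $\hat x$; the identification of $v^\infty$, $w^\infty$ as far-field patterns via linearity of the direct problem is likewise the paper's argument. The paper's write-up is terser (it simply says ``exchanging the order of integration'' and displays the final identity), whereas your explicit use of the shorthand $A,B$ with $\ov{A}=B$ makes the bookkeeping cleaner, but the underlying computation is identical.
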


\begin{proof}
Inserting (\ref{lsp}) into (\ref{eq:4.4}) gives that
\begin{align*}
&I(z,k_{+},k_{-})\\
=&\int_{\Sp^{+}_{\theta_c}}\int_{\Sp^{-}_{\theta_c}}\int_{\Sp^{-}_{\theta_c}}
\Big[
u^{\infty}(\hat x,d_1,k_{+},k_{-})\ov{u^{\infty}(\hat x,d_2,k_{+},k_{-})}
+u^{\infty}(\hat x,d_2,k_{+},k_{-})\ov{u^{\infty}(\hat x,d_1,k_{+},k_{-})}
\\
&
+|u^{\infty}(\hat x,d_1,k_{+},k_{-})|^2
+|u^{\infty}(\hat x,d_2,k_{+},k_{-})|^2
\Big]
T(\theta_{d_1})e^{-ik_{-}z\cdot{d_1^t}}T(\theta_{d_2})e^{ik_{-}z\cdot{d_2^t}}ds(d_1)ds(d_2)ds(\hat x)\\
&-\int_{\Sp^{-}_{\theta_c}}T(\theta_d)e^{-ik_{-}z\cdot{d^t}}ds(d)\int_{\Sp^+_{\theta_c}}
\int_{\Sp^{-}_{\theta_c}}{|u^\infty(\hat{x},d,k_+,k_-)|}^2T(\theta_d)e^{ik_{-}z\cdot{d^t}}ds(d)ds(\hat{x})\\
&-\int_{\Sp^{-}_{\theta_c}}T(\theta_d)e^{ik_{-}z\cdot{d^t}}ds(d)\int_{\Sp^{+}_{\theta_c}}
\int_{\Sp^{-}_{\theta_c}}{|u^{\infty}(\hat x,d,k_+,k_-)|}^2T(\theta_d)e^{-ik_{-}z\cdot{d^t}}ds(d)ds(\hat{x}).
\end{align*}
Exchanging the order of integration, we have
\ben
I(z,k_{+},k_{-})&=&\int_{\Sp^{+}_{\theta_c}}\left|\int_{\Sp^{-}_{\theta_c}}
u^{\infty}(\hat x,d,k_+,k_-)T(\theta_d)e^{-ik_{-}z\cdot{d^t}}ds(d)\right|^2ds(\hat x)\\
&&+\int_{\Sp^{+}_{\theta_c}}\left|\int_{\Sp^{-}_{\theta_c}}u^{\infty}(\hat x,d,k_+,k_-)
T(\theta_d)e^{ik_{-}z\cdot{d^t}}ds(d)\right|^2ds(\hat x),
\enn
which is the required equality.
Since $u^{\infty}(\hat x, d,k_{+},k_{-})$ is the far-field pattern associated with the incident wave $u^{i}(x,d,k_{+})=e^{ik_{+}x\cdot d}$, it is easy to obtain that $v^{\infty}(\hat x,d,k_{+},k_{-})$
and $w^{\infty}(\hat x,d,k_{+},k_{-})$ are the far-field patterns of the scattering solutions to
the scattering problem (\ref{eq:4.6}) with the boundary data $f_z(x)$ given by (\ref{eq1}) and (\ref{eq2}), respectively.
The proof is complete.
\end{proof}

By Theorem \ref{th:1} we know that, in order to investigate the behavior of $I(z,k_{+},k_{-})$,
it is essential to know the property of the function
\be\label{eq:3.1}
B_0(y):=\int_{\Sp^{-}_{\theta_c}}T^2(\theta_{d})e^{ik_{-}y\cdot d^t}ds(d).
\en
In fact, it can be shown that $B_0(y)$ decays for $|y|$ large
enough. To this end, we need the following result in \cite{Chen1}, which is similar to Van der Corput's lemma for the oscillatory
integrals.

\begin{lemma}[see Lemma 3.9 in \cite{Chen1}]\label{le:2}
For any $-\infty< a<b<\infty$, let $u\in C^2[a,b]$ be real-valued and satisfy that $|u^\prime(t)|\ge 1$
for all $t\in[a,b]$. Assume that $a=x_0<x_1<\cdots<x_N=b$ is a partition of $[a,b]$ such that
$u^\prime$ is monotone in each interval $(x_{i-1},x_i)$, $i=1,2,\ldots,N$.
Then, for the smooth function $\psi$ defined on $(a,b)$ with integrable derivative
and for any $\lambda\ge 0$, we have
\ben
\int^b_a e^{i\lambda u(t)}\psi(t)dt
\le C(2N+2)\lambda^{-1}\left[|\psi(b)|+\int^b_a|\psi^\prime(t)|dt\right],
\enn
where $C$ is a positive constant independent of $\psi$ and $\lambda$.
\end{lemma}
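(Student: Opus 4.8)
The plan is to prove this by the classical non-stationary phase device: integrate by parts against the oscillatory factor so as to gain one power of $\lambda^{-1}$ from the lower bound $|u'|\ge 1$, and then control the derivative that lands on the amplitude by means of the piecewise monotonicity of $u'$. First I would work on a single monotonicity interval $(x_{i-1},x_i)$. Since $|u'(t)|\ge 1>0$ there, one may write $e^{i\lambda u(t)}=(i\lambda u'(t))^{-1}\frac{d}{dt}e^{i\lambda u(t)}$ and integrate by parts to get
\[
\int_{x_{i-1}}^{x_i}e^{i\lambda u}\psi\,dt=\frac{1}{i\lambda}\left[\frac{\psi}{u'}e^{i\lambda u}\right]_{x_{i-1}}^{x_i}-\frac{1}{i\lambda}\int_{x_{i-1}}^{x_i}e^{i\lambda u}\left(\frac{\psi}{u'}\right)'dt.
\]
The whole estimate reduces to bounding the bracketed boundary term and the remaining integral on each subinterval and then summing.

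The boundary term is harmless: since $|\psi/u'|\le|\psi|$, and since for any $t$ one has $|\psi(t)|\le|\psi(b)|+\int_a^b|\psi'|\,dt$ (writing $\psi(t)=\psi(b)-\int_t^b\psi'$), each endpoint contribution is at most $\lambda^{-1}(|\psi(b)|+\int_a^b|\psi'|)$. For the remaining integral I would split $(\psi/u')'=\psi'/u'-\psi\,u''/(u')^2$. The first piece is controlled immediately by $\int_{x_{i-1}}^{x_i}|\psi'|\,dt$ because $|u'|\ge 1$; the second piece is where the hypotheses of the lemma really enter.

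The key step — and the one I expect to be the main obstacle — is to estimate $\int_{x_{i-1}}^{x_i}|\psi|\,|u''|/(u')^2\,dt$ with no individual bound on $u''$ available. The trick is to observe that $-u''/(u')^2=(1/u')'$, and that because $u'$ is monotone and never vanishes on $(x_{i-1},x_i)$, the function $1/u'$ is itself monotone there, so $u''/(u')^2$ has constant sign. Hence the total-variation integral telescopes: $\int_{x_{i-1}}^{x_i}|u''|/(u')^2\,dt=|1/u'(x_i)-1/u'(x_{i-1})|\le 2$, using $|1/u'|\le 1$. Factoring out $\sup_{[x_{i-1},x_i]}|\psi|\le|\psi(b)|+\int_a^b|\psi'|$, this piece is bounded by $2(|\psi(b)|+\int_a^b|\psi'|)$ on each subinterval. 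This is exactly the point at which the monotonicity assumption cannot be dispensed with, and it is what forces the partition into $N$ pieces.

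Finally I would sum over $i=1,\dots,N$. The boundary contributions involve $|\psi|$ evaluated at the endpoints $x_0,\dots,x_N$, each interior node being shared by two adjacent intervals, so there are $2N$ such terms counted with multiplicity, each bounded by $|\psi(b)|+\int_a^b|\psi'|$. The remainder contributions supply one copy of $\int_a^b|\psi'|$ (after the first pieces add up across intervals) together with $N$ copies of $2(|\psi(b)|+\int_a^b|\psi'|)$. Collecting everything and pulling out $\lambda^{-1}$ gives a bound of the form $C(4N+1)\lambda^{-1}(|\psi(b)|+\int_a^b|\psi'|)$, which is absorbed into the stated constant $C(2N+2)\lambda^{-1}(|\psi(b)|+\int_a^b|\psi'|)$ after enlarging $C$. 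The only delicate point in this final step is counting the partition-node terms with the correct multiplicity; once the per-interval estimate above is established, the summation is purely routine bookkeeping.
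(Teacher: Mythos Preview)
The paper does not actually prove this lemma; it is quoted verbatim from \cite{Chen1} (Lemma~3.9 there) and used as a black box in the proof of Lemma~\ref{le:3}. So there is no ``paper's own proof'' to compare against.

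Your argument is the standard Van~der~Corput first-derivative estimate and is correct. The integration-by-parts step, the splitting of $(\psi/u')'$, and the crucial observation that $u''/(u')^2=-(1/u')'$ has constant sign on each monotonicity subinterval (so that its $L^1$-norm telescopes to at most~$2$) are exactly the classical devices. The final bookkeeping is also fine: your constant $C(4N+1)$ is dominated by $2C(2N+2)$ for all $N\ge 1$, so it absorbs into the stated form with an absolute~$C$.
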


With the aid of Lemma \ref{le:2}, we will prove the following lemma.

\begin{lemma}\label{le:3}
For $y\in\R^2$ with $|y|$ large enough, we have
\be\label{eq7}
|B_0(y)|\le C |y|^{-1/2},
\en
where $C>0$ is a constant independent of $y$.
\end{lemma}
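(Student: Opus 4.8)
**

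The plan is to reduce the estimate on $B_0(y)$ to a one-dimensional oscillatory integral in the angular variable $\theta_d$ and then apply Lemma~\ref{le:2}. First I would change variables from $d$ to the scalar $\theta_d \in [\pi+\theta_c, 2\pi-\theta_c]$, so that
\ben
B_0(y)=\int_{\pi+\theta_c}^{2\pi-\theta_c} T^2(\theta_d)\, e^{ik_{-}\, y\cdot d^t(\theta_d)}\, d\theta_d,
\enn
where $d^t=(\cos\theta_d^t,\sin\theta_d^t)$ and $\theta_d^t$ is the transmission angle determined by the refraction relation $k_+\cos\theta_d=k_-\cos\theta_d^t$ from~(\ref{eq:2}). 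Writing $y=|y|\,(\cos\beta,\sin\beta)$ and $\lambda=k_-|y|$, the phase becomes $u(\theta_d)=\hat y\cdot d^t=\cos(\theta_d^t-\beta)$, so that $B_0(y)=\int e^{i\lambda u(\theta_d)}\psi(\theta_d)\,d\theta_d$ with amplitude $\psi=T^2(\theta_d)$. To match the normalization $|u'|\ge 1$ in Lemma~\ref{le:2} I would rescale $u$ by its (lower-bounded) derivative; the $|y|^{-1/2}$ rate, rather than the $|y|^{-1}$ of Lemma~\ref{le:2}, must come from splitting the domain near the stationary points of the phase.

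The key step is understanding the phase $u(\theta_d)=\cos(\theta_d^t-\beta)$ and its derivative. Differentiating the refraction relation gives $-k_+\sin\theta_d\,d\theta_d=-k_-\sin\theta_d^t\,d\theta_d^t$, hence $d\theta_d^t/d\theta_d = (k_+\sin\theta_d)/(k_-\sin\theta_d^t)$, and therefore $u'(\theta_d)=-\sin(\theta_d^t-\beta)\,(k_+\sin\theta_d)/(k_-\sin\theta_d^t)$. The stationary points occur where $\sin(\theta_d^t-\beta)=0$, i.e.\ where $d^t$ is parallel to $\pm\hat y$; near such a point $u'$ vanishes linearly while $u''$ stays bounded away from zero, the classic setting for stationary phase giving decay $\lambda^{-1/2}$. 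The natural route is therefore to isolate a small neighborhood of each stationary point, bound its contribution crudely by its length $\sim\lambda^{-1/2}$ (times the bounded amplitude $T^2$), and on the complementary region apply Lemma~\ref{le:2} after dividing by the now-lower-bounded $|u'|\gtrsim \lambda^{-1/2}$, which yields a contribution of order $\lambda^{-1}/\lambda^{-1/2}=\lambda^{-1/2}$. Summing gives $|B_0(y)|\le C\lambda^{-1/2}=C(k_-|y|)^{-1/2}\le C|y|^{-1/2}$.

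The hard part will be handling the behavior of $\theta_d^t$ and the amplitude $T^2(\theta_d)$ at the endpoints $\theta_d=\pi+\theta_c$ and $\theta_d=2\pi-\theta_c$. When $k_+>k_-$ and $\theta_c=\arccos(k_-/k_+)$, the endpoints are exactly where $\theta_d^t$ hits $\pi$ or $2\pi$, i.e.\ $\sin\theta_d^t\to 0$, so the Jacobian $d\theta_d^t/d\theta_d$ blows up and $T(\theta_d)\to 2$ while the reflection coefficient degenerates; one must check that $\psi=T^2$ remains bounded with integrable derivative and that the monotonicity-of-$u'$ hypothesis of Lemma~\ref{le:2} holds after partitioning $[a,b]$ into finitely many subintervals $x_0<\cdots<x_N$. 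I would verify that $u(\theta_d)=\cos(\theta_d^t-\beta)$, viewed through the monotone map $\theta_d\mapsto\theta_d^t$, has $u'$ changing monotonicity only finitely often (controlled by the zeros of $\sin(\theta_d^t-\beta)$ and the single maximum of the Jacobian), so $N$ can be taken independent of $y$. Care near the critical angle is the genuine obstacle; once the amplitude is shown to be $C^1$ with bounded variation uniformly in $y$, the oscillatory-integral machinery delivers the stated $|y|^{-1/2}$ bound.
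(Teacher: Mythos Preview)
Your overall strategy---isolate $\delta$-neighborhoods of the stationary points, bound those pieces trivially by $C\delta$, apply Lemma~\ref{le:2} on the complement to get $C/(\delta|y|)$, then optimize $\delta=|y|^{-1/2}$---is exactly the approach the paper takes. The difference is in the choice of integration variable, and this difference matters for the execution.

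You keep $\theta_d$ as the variable, so your phase is the composite $u(\theta_d)=\cos(\theta_d^t(\theta_d)-\beta)$ and its derivative carries the Jacobian $d\theta_d^t/d\theta_d=(k_+\sin\theta_d)/(k_-\sin\theta_d^t)$, which blows up at the endpoints when $k_+>k_-$. As you note, this complicates the verification that $u'$ is piecewise monotone with a number of pieces independent of $y$, and it forces you to track a square-root singularity in $\psi'=d(T^2)/d\theta_d$ at the endpoints. All of this can be pushed through, but it is work. The paper instead changes variables to $\theta_d^t$, so that the phase becomes simply $\cos(\phi-\theta_d^t)$: its derivative is $\sin(\phi-\theta_d^t)$, whose monotonicity structure is transparent. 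The Jacobian is absorbed into the new amplitude $f(\theta_d^t)$, and one checks directly that $f$ is bounded with $f'\in L^1$ on the new interval $[\pi+\tilde\theta_c,\,2\pi-\tilde\theta_c]$ (in fact $f$ vanishes at the endpoints in both cases $k_+\gtrless k_-$). The endpoint difficulties you anticipate then disappear from the phase and become a routine amplitude estimate. The paper also splits into cases according to whether the stationary point $\theta_d^t=\phi$ lies in the interior of the interval or near an endpoint, a case distinction your sketch does not yet make explicit but would need.
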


\begin{proof}
Introducing the variable $\theta_d=\arccos[({k_-}/{k_+})\cos\theta_d^t]$ and noting that
\ben
d\theta_d=-\frac{k_-\sin\theta_d^t}{\sqrt{k_+^2-k_-^2(\cos\theta_d^t)^2}}d\theta_d^t,\;\;\;
\sin\theta_d=-(1/k_+)\sqrt{k_+^2-k_-^2(\cos\theta_d^t)^2},
\enn
we can rewrite $B_0(y)$ as
\be\label{eq:5}
B_0(y)&=&\int^{2\pi-\tilde{\theta}_c}_{\pi+\tilde{\theta}_c}f(\theta_d^t)
\exp\left[ik_-|y|(\cos\phi\cos\theta_d^t+\sin\phi\sin\theta_d^t)\right]d\theta_d^t\no\\
&=&\int^{2\pi-\tilde{\theta}_c}_{\pi+\tilde{\theta}_c}f(\theta_d^t)
\exp\left[ik_-|y|\cos(\phi-\theta_d^t)\right]d\theta_d^t,
\en
where
\ben
f(\theta_d^t)=\frac{-4k_{-}\sin{\theta_d^t}\sqrt{k_+^2-k_-^2(\cos\theta_d^t)^2}}
{\left[-\sqrt{k_+^2-k_-^2(\cos\theta_d^t)^2}+k_-\sin\theta_d^t\right]^2}.
\enn
Here, $\tilde{\theta}_c$ is defined as
\ben
\tilde{\theta}_c:=\begin{cases}
\ds 0,  &k_{-}<k_{+},\\
\ds \arccos({k_+}/{k_-})\in(0,\pi/2),&k_->k_+,\\
\end{cases}
\enn
and $y=|y|(\cos\phi,\sin\phi),\;\phi\in [0,2\pi]$.
It is easy to obtain that
\be\label{eq4}
\|f(\theta_d^t)\|_{C[\pi+\tilde{\theta}_c,2\pi-\tilde{\theta}_c]}
+\|f'(\theta_d^t)\|_{L^1[\pi+\tilde{\theta}_c,2\pi-\tilde{\theta}_c]}\leq C.
\en
Now the rest of the proof is split into two steps.

{\bf Step 1.} We first consider the case with $\phi\in[\pi,2\pi]$. Choose $\delta>0$ small enough such that
${2\delta}/\pi<\sin\delta$ and $0<\delta<({\pi-2\tilde{\theta}_c})/6$.
We distinguish between the following two cases.

{\bf Case 1:} $\phi\in[\pi,\pi+\tilde{\theta}_c+2\delta)\cup(2\pi-\tilde{\theta}_c-2\delta,2\pi]$. From the choice of $\delta$, we have $\pi+\tilde{\theta}_c+3\delta<2\pi-\tilde{\theta}_c-3\delta$
and thus split (\ref{eq:5}) into three parts:
\ben
B_0(y)=\left[\int^{\pi+\tilde{\theta}_c+3\delta}_{\pi+\tilde{\theta}_c}
+\int^{2\pi-\tilde{\theta}_c-3\delta}_{\pi+\tilde{\theta}_c+3\delta}+\int^{2\pi-\tilde{\theta}_c}_{2\pi-\tilde{\theta}_c-3\delta}\right]f(\theta_d^t)
\exp\left[ik_-|y|\cos(\phi-\theta_d^t)\right]d\theta_d^t
=:I_1+I_2+I_3.
\enn
Set $u(\theta_d^t):=({\pi}/2)\cos(\phi-\theta_d^t)/\delta$. Then $|u^\prime(\theta_d^t)|=|({\pi}/2)\sin(\phi-\theta_d^{t})/\delta|\ge|({\pi}/2)(\sin\delta)/\delta|\ge1$,
and $u^\prime(\theta_d^t)$ is piecewise monotone in ${[\pi+\tilde{\theta}_c+3\delta, 2\pi-\tilde{\theta}_c-3\delta]}$.
Thus by Lemma \ref{le:2}, we have
\be\label{eq5}
|I_2|\le \frac{C}{\delta |y|}\left(\|f(\theta_d^t)\|_{C[\pi+\tilde{\theta}_c,2\pi-\tilde{\theta}_c]}
+\|f'(\theta_d^t)\|_{L^1[\pi+\tilde{\theta}_c,2\pi-\tilde{\theta}_c]}
\right).
\en
It is easy to show that
\be\label{eq6}
|I_j|\le C{\delta}{\|f(\theta_d^t)\|}_{C[\pi+\tilde{\theta}_c,2\pi-\tilde{\theta}_c]},\;\;\; j=1,3.
\en
Combining the estimates (\ref{eq4}), (\ref{eq5}) and (\ref{eq6}) gives
\begin{align}\label{eq:4.3}
|B_0(y)|\le C\left(\delta+\frac 1{\delta |y|}\right)
\left(\|f(\theta_d^t)\|_{C[\pi+\tilde{\theta}_c,2\pi-\tilde{\theta}_c]}
+\|f'(\theta_d^t)\|_{L^1[\pi+\tilde{\theta}_c,2\pi-\tilde{\theta}_c]}
\right)
\le \widetilde{C}\left(\delta+\frac 1{\delta |y|}\right).
\end{align}
Therefore, taking $\delta=|y|^{-1/2}$ in (\ref{eq:4.3}) yields the estimate
(\ref{eq7}).

{\bf Case 2:} $\phi\in[\pi+\tilde{\theta}_c+2\delta,2\pi-\tilde{\theta}_c-2{\delta}]$.
We use a similar idea as in the proof of Case 1 and split (\ref{eq:5}) into three parts:
\ben
B_0(y)=\left[\int^{\phi-\delta}_{\pi+\tilde{\theta}_c} +\int^{\phi+\delta}_{\phi-\delta}
+\int^{2\pi-\tilde{\theta}_c}_{\phi+\delta}\right]f(\theta_d^t)
\exp\left[ik_-|y|\cos(\phi-\theta_d^t)\right]d\theta_d^t=:II_1+II_2+II_3.
\enn
Similarly as in the estimate of $I_2$ in Case 1, it is deduced that
\ben
|II_j|\le \frac{C}{\delta |y|}
\left(\|f(\theta_d^t)\|_{C[\pi+\tilde{\theta}_c,2\pi-\tilde{\theta}_c]}
+\|f'(\theta_d^t)\|_{L^1[\pi+\tilde{\theta}_c,2\pi-\tilde{\theta}_c]}
\right),
\quad j=1,3.
\enn
It is easy to see that $\ds |II_2|\le C\delta{\|f(\theta_d^t)\|}_{C[\pi+\tilde{\theta}_c,2\pi-\tilde{\theta}_c]}$.
Then similarly as in the proof of Case 1, we can obtain the estimate (\ref{eq7}).

{\bf Step 2.} Consider the case $\phi\in[0,\pi]$. Introduce the new variable $\psi=\phi+\pi$.
Then $\psi\in[\pi,2\pi]$ and
\ben
B_0(y)=\int^{2\pi-\tilde{\theta}_c}_{\pi+\tilde{\theta}_c}f(\theta_d^t)\exp\left[{-ik_-|y|\cos(\psi-\theta_d^t)}\right]d\theta_d^t.
\enn
Consequently, the rest proof of this case is similar to the arguments in Step 1.
The proof is thus complete.
\end{proof}

We now study the behavior of the imaging function $I(z,k_{+},k_{-})$.
To this end,
denote by $G(x,y)$ the fundamental solution of the unperturbed problem (\ref{eq:1}) with $D=\emptyset$,
which can be derived by the Fourier transform technique (see, e.g., \cite{Li2010}).
Define the single- and double-layer potentials
\begin{align*}
(\mathcal{S}\psi)(x)&=\int_{\pa D}G(x,y)\psi(y)ds(y),\quad x\in\R^2\ba\pa D,\\
(\mathcal{D}\psi)(x)&=\int_{\pa D}\frac{\pa G(x,y)}{\pa\nu(y)}\psi(y)ds(y),\quad x\in\R^2\ba\pa D,
\end{align*}
and the boundary integral operators
\begin{align*}
(S\psi)(x)&=\int_{\partial D}G(x,y)\psi(y)ds(y),\quad x\in\partial D,\\
(K\psi)(x)&=\int_{\pa D}\frac{\pa G(x,y)}{\pa\nu(y)}\psi(y)ds(y),\quad x\in\partial D.
\end{align*}
It is well known that $G(x,y)$ has the asymptotic formula \cite{AIL05}:
\begin{align}\label{eq3}
G(x,y)=\frac{e^{i\pi/4}}{\sqrt{8\pi k_+}}\frac{e^{ik_{+}|x|}}{\sqrt{|x|}}
T(\theta_{\hat x})e^{-ik_- y\cdot\hat{x}^t}+
o\left(\frac{1}{\sqrt{|x|}}\right),\quad|x|\rightarrow\infty,
\end{align}
where $\hat x=x/|x|=(\cos\theta_{\hat x},\sin\theta_{\hat x})\in \Sp^{+}_{\theta_c}$
and
$\hat{x}^t=(\cos\theta^t_{\hat x},\sin\theta^t_{\hat x})$ with
$\theta^{t}_{\hat x}$ and $\theta_{\hat x}$ satisfying (\ref{eq:3}).

Define
\begin{align*}
(S^{\infty}\psi)(\hat x)&=\frac{e^{i\pi/4}}{\sqrt{8\pi k_+}}
\int_{\pa D}T(\theta_{\hat x})e^{-ik_{-} y\cdot\hat{x}^t}\psi(y)ds(y),\quad\hat{x}\in\Sp^{+}_{\theta_c},\\
(K^{\infty}\psi)(\hat x)&=\frac{e^{i\pi/4}}{\sqrt{8\pi k_+}}\int_{\pa D}
\frac{\pa T(\theta_{\hat x})e^{-ik_{-}y\cdot\hat{x}^t }\psi(y)}{\pa\nu(y)}ds(y),\quad\hat x\in\Sp^+_{\theta_c}.
\end{align*}
From (\ref{eq3}), it is clear that $(S^{\infty}\psi)(\hat x)$ and $(K^{\infty}\psi)(\hat x)$ are the far-field patterns
on $\Sp^+_{\theta_c}$ of $(S\psi)(x)$ and $(D\psi)(x)$, respectively.

From Theorem {\ref{th:1}} it follows that
\ben
v^s(x,z,k_+,k_-)=\int_{\Sp^-_{\theta_c}}u^s(x,d,k_+,k_-)T(\theta_{{d}})e^{-ik_{-} z\cdot{d}^t}ds(d)
\enn
is the scattering solution to the problem (\ref{eq:4.6}) with boundary data $f_z$ given by (\ref{eq1}).
From \cite{Li2010} it is known that $G(x,y)=\Phi(x,y)+H(x,y)$ for $x,y\in\R^2_{-}$ and $x\neq y$, where
\ben
\Phi(x,y)=\frac{i}{4} H^1_0(k_{-}|x-y|),\;\;\;x\not=y,
\enn
is the fundamental solution of the Helmholtz equation $\Delta w +{k}^2_-w=0$ in $\R^2$ with $H^1_0$ being the Hankel
function of the first kind of order zero and
$H(x,y)\in C^{\infty}(\R^2_{-}\times\R^2_{-})$ accounts for
the {reflection} due to the layered medium. Thus it is easy to derive that $S$ and $K$ are compact
perturbations of the corresponding integral operators associated with the homogeneous problem
(i.e., with $G$ replaced by $\Phi$). Therefore, by using a similar argument as in the proof of
Theorem 3.11 in \cite{DK13}, we can seek the solution $v^s(x,z,k_{+},k_{-})$ in the form of
combined double- and single-layer potential with density $\phi_z\in C(\pa D)$, that is,
\be\label{eq:4.5}
v^{s}(x,z,k_{+},k_{-})=(\mathcal{D}\phi_z)(x)-i(\mathcal{S}\phi_z)(x),
\quad x\in\R^2\ba\ov{D},
\en
where $\phi_z$ is the unique solution to the boundary integral equation
\ben
A\phi_z:={\left(\frac 12 I+ K\right)\phi_z- iS\phi_z}=f_z
\enn
with $f_z$ given by (\ref{eq1}).
Arguing similarly as in the proof of Theorem 3.11 in \cite{DK13}, we can prove that the operator $A$
is bijective and invertible in $C(\pa D)$. Thus we have
\be\label{eq:4.7}
C_1\|f_z\|_{C(\pa D)}\le\|\phi_z\|_{C(\pa D)}\le C_2\|f_z\|_{C(\pa D)}
\en
with two positive constants $C_1$ and $C_2$ independent of $z$.

On the other hand, we have, by Lemma \ref{le:3}, that for $x\in\pa D$,
\ben
f_z(x)=\begin{cases}
\ds-\int_{\Sp^{-}_{\theta_c}}{T^2(\theta_{{d}})}ds(d), &\text{if}\;z=x,\\
\ds O(|x-z|^{-1/2}), \quad & \text{if}\;|z-x|\gg1.
\end{cases}.
\enn
Let $d(z,\pa D)$ be the distance between $z$ and $\pa D$. Then it follows from (\ref{eq:4.7}) that
\be\label{eq:4.8}
\begin{cases}
\ds\|\phi_z\|_{C(\pa D)}\ge C_1\int_{\Sp^{-}_{\theta_c}}{{T^2(\theta_{{d}})}}ds(d),&\text{if}\;z\in\pa D,\\
\ds\|\phi_z\|_{C(\pa D)}= O(d(z,\pa D)^{-1/2}), & \text{if}\;d(z,\pa D)\gg 1.
\end{cases}
\en
Since $v^{\infty}(\hat x,z,k_+,k_-)$ is the far-field pattern of the scattered field $v^s(x,z,k_{+},k_{-})$,
we have
\ben
v^{\infty}(\hat x,z,k_{+},k_{-})=(K^{\infty}\phi_z)(\hat x) - i(S^{\infty}\phi_z)(\hat x),
\quad\hat x\in\Sp^{+}_{\theta_c}.
\enn
Note that
\ben
I_1(z,k_{+},k_{-})=\int_{\Sp^{+}_{\theta_c}}|v^{\infty}(\hat x,z,k_{+},k_{-})|^2 ds(\hat x).
\enn
Then, by (\ref{eq:4.8}) and the properties of the operators $S^{\infty}$ and $K^{\infty}$,
it can be seen that $I_1(z,k_{+},k_{-})$
decays as $z$ moves away from $D$.

Similarly as for the analysis of $I_1(z,k_{+},k_{-})$, it can be seen that
\ben
I_2(z,k_{+},k_{-})= \int_{\Sp^{+}_{\theta_c}}|w^{\infty}(\hat x,z,k_{+},k_{-})|^2ds(\hat x)
\enn
decays as $z$ moves away from $D^\prime$, where $D^\prime$ is the symmetric obstacle
of $D$ with respect to the origin.

From what has been discussed above, it can be seen that the imaging functional $I(z,k_+,k_-)$ decays
as $z$ moves away from $D\cup D^\prime$. Further, based on the above analysis, it is reasonable
to expect that $I(z,k_+,k_-)$ will take a large value
in the neighborhood of $\pa D\cup\pa D^\prime$.
This is confirmed by numerical examples in Section \ref{sec4} though a rigorous analysis is not available yet.
According to the performance of $I(z,k_+,k_-)$
and the fact that $D'\subset\R^2_+$, it is enough to determine the location of the obstacle $D$
even though the obstacle $D$ is completely buried in the lower half-space.

\begin{remark}\label{re2.5} {\rm
The performance of $I(z,k_+,k_-)$ can also be studied by using the analysis in \cite{Li_2015}. In fact,
with the aid of Theorem 2.1 in \cite{Li_2015} and the smallness assumption on the obstacle $D$, we can easily obtain the following asymptotic formula:
\be\label{eq:2.6}
|v^{\infty}(\hat x, z,k_{+},k_{-})|=\frac{T(\theta_{\hat{x}})}{\ln\rho}
\left|\sum^{Q}_{j=1}c_j\int_{\Sp^{-}_{\theta_c}}{\color{hw}{e^{ik_{-}(z_j-z)\cdot d^{t}}}}T(\theta_d)ds(d)\right|
+O\left(\frac 1{(\ln\rho)^2}+\frac1{\sqrt L}\right)
\en
for sufficiently large $L$, as $\rho\rightarrow +0$, where $v^{\infty}(\hat x,z,k_{+},k_{-})$ is given in Theorem \ref{th:1}
and $c_j,\;j=1,\ldots,Q$, are constants depending on $\Omega_j,k_-,d$,
but independent of $\rho$. Similar to the proof of Lemma \ref{le:3}, it is easy to show that
\be\label{eq2.7}
\left|\int_{\Sp^-_{\theta_c}}{\color{hw}{e^{ik_{-}(z_j-z)\cdot d^{t}}}}T(\theta_d)ds(d)\right|\le C{|z_j-z|}^{-1/2},
\en
where $C$ is a constant independent of $z_j$ and $z$.
By (\ref{eq:2.6}) and (\ref{eq2.7}), $z_j$, $j=1,\ldots,Q$, can be seen as a local maximizer in a neighborhood
of $z_j$ of $I_1(z,k_{+},k_{-})$ defined in Theorem \ref{th:1}.
Similarly to the above discussion, $z_j'$, $j=1,\ldots,Q$, can be seen as a local maximizer in a neighborhood
of $z_j'$ of $I_2(z,k_{+},k_{-})$ defined in Theorem \ref{th:1},
where $z_j'$ is the symmetric point of $z_j$ with respect to the origin.
Therefore, it is expected that multiple small scatterers can be determined by using the imaging functional
$I(z,k_+,k_-)$.
This is in accordance with our analysis on $I(z,k_{+},k_{-})$.
}
\end{remark}

We are now  ready to give the direct imaging algorithm for the inverse problem.
Suppose that there are $n_f$ measurement points $\hat{x}_j\in \Sp^{+}_{\theta_c}$ $(j=1,2,\ldots,n_f)$
and $n^{(1)}_d$ sets of two incident directions $d^{(1)}_{1l},
d^{(1)}_{2i}\in\Sp^{-}_{\theta_c}$ $(l,i=1,2,\ldots, n^{(1)}_d)$,
{where $\hat x_j=(\cos\theta_{\hat x_j},\sin \theta_{\hat x_j})$ with $\theta_{\hat x_j}=\theta_c+{(j-1)({\pi-2\theta_c})}/{n_f}$, $d^{(1)}_{1l}=(\cos{\theta_{d^{(1)}_{1l}}},\sin\theta_{d^{(1)}_{1l}})$
with $\theta_{d^{(1)}_{1l}}=\pi+\theta_c+{(l-1)({\pi-2\theta_c})}/{n^{(1)}_d}$ and $d^{(1)}_{2i}=(\cos{\theta_{d^{(1)}_{2i}}},\sin\theta_{d^{(1)}_{2i}})$
with $\theta_{d^{(1)}_{2i}}=\pi+\theta_c+{(i-1)({\pi-2\theta_c})}/{n^{(1)}_d}$.} Let $P=\pi-2\theta_c$.
Then with the aid of the trapezoid quadrature rule, the continuous imaging function $I(z,k_+,k_-)$ given in
(\ref{eq:4.4}) can be approximated by the discrete imaging function $I_A(z,k_+,k_-)$ defined by
\begin{align}\label{eq:2.5}
&I_A(z,k_{+},k_{-})\no\\
=&\frac{P}{n_f}\left(\frac{P}{n^{(1)}_d}\right)^2\sum^{n_f}_{j=1}\sum^{n^{(1)}_d}_{l=1}\sum^{n^{(1)}_d}_{{\color{hw}i=1}}
\left|u^{\infty}(\hat x_j,d^{(1)}_{1l},d^{(1)}_{2i},k_+,k_-)\right|^2 T(\theta_{d^{(1)}_{1l}})
e^{-ik_{-}z\cdot{{(d^{(1)}_{1l})}^t}}T(\theta_{d^{(1)}_{2i}})e^{ik_{-}z\cdot{{(d^{(1)}_{2i})}^t}}\no\\
-&\frac{P}{n_f}\frac{P}{4n^{(1)}_d}\sum^{{n^{(1)}_d}}_{i=1}T(\theta_{d^{(1)}_{2i}})e^{ik_{-}z\cdot{{(d^{(1)}_{2i})}^t}}
\sum^{n_f}_{j=1}\sum^{n^{(1)}_d}_{l=1}\left|u^{\infty}(\hat x_j,d^{(1)}_{1l},d^{(1)}_{2l},k_{+},k_{-})\right|^2
T(\theta_{d^{(1)}_{1l}})e^{-ik_{-}z\cdot{(d^{(1)}_{1l})^t}}\no\\
-&\frac{P}{n_f}\frac{P}{4n^{(1)}_d}\sum^{n^{(1)}_d}_{l=1}T(\theta_{d^{(1)}_{1l}})e^{-ik_{-}z\cdot{{(d^{(1)}_{1l})}^t}}
\sum^{n_f}_{j=1}\sum^{n^{(1)}_d}_{i=1}\left|u^{\infty}(\hat x_j,d^{(1)}_{1i},d^{(1)}_{2i},k_+,k_-)\right|^2
T(\theta_{d^{(1)}_{2i}})e^{ik_{-}z\cdot{(d^{(1)}_{2i})^t}},
\end{align}
where we have employed the facts that $d^{(1)}_{1l}=d^{(1)}_{2l}$ $(l=1,\ldots, n^{(1)}_d)$
and $u^\infty(\hat{x},d,d,k_+,k_-)=2u^\infty(\hat{x},d,k_+,k_-)$
for $\hat x \in \Sp^{+}_{\theta_c}$ and $d\in\Sp^{-}_{\theta_c}$.
In the numerical experiments, we will consider the noisy phaseless far-field pattern $|u_{\delta}^{\infty}(\hat x_j,d^{(1)}_{1l},d^{(1)}_{2i},k_+,k_-)|,\;j=1,\ldots,n_f,\;
l=1,\ldots,n^{(1)}_d,\;i=1,\ldots,n^{(1)}_d$, as the measured data, where $|u_{\delta}^{\infty}(\hat x_j,d^{(1)}_{1l},d^{(1)}_{2i},k_+,k_-)|$ is the small perturbation of the phaseless far-field pattern $|u^{\infty}(\hat x_j,d^{(1)}_{1l},d^{(1)}_{2i},k_+,k_-)|$ with noise level $\delta>0$ (see the formula (\ref{eq6.1}) below).
Accordingly, the discrete imaging function
$I_A(z,k_+,k_-)$ with noisy phaseless far-field data can be computed by (\ref{eq:2.5}) with $|u^{\infty}(\hat x_j,d^{(1)}_{1l},d^{(1)}_{2i},k_+,k_-)|$ replaced by
$|u_{\delta}^{\infty}(\hat x_j,d^{(1)}_{1l},d^{(1)}_{2i},k_+,k_-)|$.
Finally, our direct imaging algorithm is based on the discrete imaging function
$I_A(z,k_+,k_-)$ and presented in Algorithm \ref{A1}.

\begin{algorithm}
\caption{\textbf{Locating multiple small anomalies}\label{A1}}
\SetAlgoLined
\KwIn{Noisy phaseless data $|u_{\delta}^{\infty}(\hat x_j,d^{(1)}_{1l},d^{(1)}_{2i},k_+,k_-)|,j=1,\ldots,n_f,\;
l=1,\ldots,n^{(1)}_d,\;i=1,\ldots,n^{(1)}_d$.}
\KwOut{The number and location of the small scatterers.}

Choose a sampling domain $\Omega_P\subset\R^2_{-}$ containing the obstacle $D$ with a mesh $\mathcal{T}$.

Compute the imaging function $I_A(z,k_{+},k_{-})$ with noisy phaseless far-field data for $z\in \mathcal{T}$.

Locate all the sampling points on $\mathcal{T}$ at which $I_A(z,k_{+},k_{-})$ takes a large value.
\end{algorithm}

\begin{remark}\label{re:2} {\rm
Since we have the a priori information that the scatterers are embedded in the lower half-space,
it is reasonable to choose the sampling region $\Omega_P\subset\R^2_-$.
This, combined with the property of $I(z,k_+,k_-)$
and the fact that $D'\cap\R^2_-=\varnothing$, makes it possible to determine the location
of the multiple anomalies of the obstacle $D$.
}
\end{remark}

\begin{remark}\label{re:1} {\rm
Algorithm \ref{A1} can be applied to determine the location of extended obstacle $D$. In fact,
by the same analysis discussed above, it is expected that the imaging function $I(z,k_+,k_-)$
decays as $z$ moves away from $D\cup D^\prime$ and reaches the  local maximums at some points in the
neighborhood of $\pa D\cup\pa D^\prime$.
The latter is not yet rigourously proved but will be confirmed by numerical examples in Section \ref{sec4}.
With the aid of the {a priori} information that $D$ is embedded in the lower half-space,
$I(z,k_{+},k_{-})$ is able to help us to determine the location of $D$ roughly, which will provide
our Newton-type iteration method
presented in next section with the initial guess.
}
\end{remark}

\section{Recovering the location and shape of extended obstacles}\label{sec3}
\setcounter{equation}{0}

As discussed in Remark \ref{re:1}, our direct imaging algorithm can determine the location of
extended obstacles roughly, providing some a priori information for the iteration-type method.
With the aid of the {\color{hw}{a priori}} information, we develop a recursive Newton-type iteration algorithm
in frequencies to recover the location and shape of extended obstacles embedded in the lower half-space.

Our aim is to solve the nonlinear and ill-posed equation
\be\label{eq:5.1}
F_{d_1,d_2,k_{+},k_{-}}[\pa D](\hat x)=|u^{\infty}(\hat x,d_1,d_2,k_{+},k_{-})|^2,\quad \hat{x}\in \Sp^+_{\theta_c},
\en
where the far-field operator $F_{d_1,d_2,k_{+},k_{-}}$ maps the boundary of the extended scatterer $\pa D$
to the corresponding phaseless far-field data induced by the incident wave
$u^{i}(x,d_1,d_2,k_+)=e^{ik_+x\cdot d_1}+e^{ik_+x\cdot d_2}$.
For simplicity, we assume that $D$ is simply connected in what follows.
For the case when the obstacle $D$ has
several connected components, see the discussion
in Remark \ref{re3.3}.

To proceed further, we need to characterize the Fr\'{e}chet derivative of the far field operator.
Similarly to \cite{BLS}, we choose the Hilbert space $L^2(\Sp^+_{\theta_c})$ of square integrable
functions on $\Sp^+_{\theta_c}$ as the data space, which is suitable for describing
the measurement error. Let $h\in C^2(\pa D)$ be a twice continuous differentiable vector field,
and define $\pa D_h:=\{y\in\R^2|y=x+h(x),x\in\pa D\}$. For a sufficient small $\ell>0$
depending on $\pa D$, each $\pa D_h$ with $\|h(x)\|_{C^2(\pa D)}\le \ell$ is also a $C^2$-smooth boundary
of a domain $D_h$. Denote the set $V_\ell:=\{h\in C^2{(\pa D)},\|h(x)\|_{C^2(\pa D)}\le \ell\}$,
then the far field operator is called Fr\'{e}chet differentiable at $\pa D$ if there exists a linear
mapping $F^\prime_{d_1,d_2,k_{+},k_{-}}:C^2(\pa D)\rightarrow L^2(\Sp^+_{\theta_c})$ such that for $h\in V_\ell$,
we have
\ben
F_{d_1,d_2,k_{+},k_{-}}[\pa D_h]- F_{d_1,d_2,k_{+},k_{-}}[\pa D]-F^\prime_{d_1,d_2,k_{+},k_{-}}[\pa D,h]=o(||h||_{C^2(\pa D)}).
\enn
The following theorem characterizes the Fr\'{e}chet derivative of the far-field operator $F_{d_1,d_2,k_{+},k_{-}}$.

\begin{theorem}\label{th:5.1}
Assume that $\pa D$ is a $C^2$-smooth boundary, the incident field is given by $u^i(x,d_1,d_2,k_{+})$
with $d_1,d_2\in\Sp^-_{\theta_c}$ and $v^0(x):=\sum^2_{i=1}u^0(x,d_i,k_{+},k_{-})$.
Let $u^s\in H^1_{loc}(\R^2\ba\ov{D})$
denote the scattered wave induced by $u^i(x,d_1,d_2,k_{+})$, which solves
the problem (\ref{eq:1}) with the boundary data $f=-v^0|_{\pa D}$. Then the operator
$F_{d_1,d_2,k_{+},k_{-}}$ is Fr\'{e}chet differentiable at $\pa D$ with the Fr\'{e}chet derivative given
by $F^\prime_{d_1,d_2,k_{+},k_{-}}[\pa D,h]=2\real(\ov{{u}^{\infty}}u^{\prime,\infty})$, where $h\in C^2(\pa D)$
and $u^{\prime,\infty}$ is the far-field pattern of $u^\prime\in H^1_{loc}(\R^2\ba\ov{D})$
solving the problem (\ref{eq:1}) with the boundary data ${\color{hw}{f=-[{\pa(v^0+u^s)}/{\pa\nu}](h\cdot\nu)}}$
on $\pa D$,
where $\nu$ denotes the outward unit normal vector on $\pa D$.
\end{theorem}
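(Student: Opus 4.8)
The plan is to reduce the differentiability of the phaseless far-field map to that of the ordinary (phased) far-field map, and then to treat the phased map by a domain-derivative argument adapted to the two-layered Green's function $G$. First observe that the boundary data $f=-v^0|_{\pa D}$ forces the total field $u=v^0+u^s$ to satisfy the sound-soft condition $u=0$ on $\pa D$. Writing $F_{d_1,d_2,k_{+},k_{-}}[\pa D]=|u^{\infty}|^2=u^{\infty}\ov{u^{\infty}}$, the map $w\mapsto|w|^2$ from complex-valued to real-valued $L^2(\Sp^+_{\theta_c})$ is Fr\'echet differentiable with derivative $\delta w\mapsto 2\real(\ov{w}\,\delta w)$; hence by the chain rule it suffices to show that $\pa D\mapsto u^{\infty}(\cdot,d_1,d_2,k_{+},k_{-})$ is Fr\'echet differentiable into $L^2(\Sp^+_{\theta_c})$ and to identify its derivative as the claimed $u^{\prime,\infty}$. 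This isolates the genuine analytic content in the differentiability of the phased far field.

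For the phased map I would reuse the integral-equation framework already set up in Section \ref{sec2}, following the approach of \cite{BLS}. Since $D$ is compactly contained in $\R^2_-$ and every admissible perturbation $h\in V_\ell$ is supported on $\pa D$, the interface $\Gamma$ and the transmission conditions are untouched by the variation, so the background Green's function $G=\Phi+H$ (with $H$ smooth on $\R^2_-\times\R^2_-$) plays exactly the role that the free-space fundamental solution plays in the homogeneous problem. Representing $u^s$ by the combined potential $(\mathcal{D}-i\mathcal{S})\phi$ over $\pa D$ with density $\phi$ solving $A\phi=f$, I would parametrize $\pa D_h$, pull the boundary integral equation back to $\pa D$, and differentiate in $h$. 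Because the kernels of $S,K$ and of their far-field counterparts $S^{\infty},K^{\infty}$ depend smoothly on the boundary point (the singularity of $\Phi$ being of the same logarithmic type as in the homogeneous case and $H$ being $C^{\infty}$), the operator family and the data depend Fr\'echet-differentiably on $h$, and the invertibility of $A$ in $C(\pa D)$ established in Section \ref{sec2} lets me differentiate the solution density and hence $u^s$ and $u^{\infty}$.

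To identify the limit, I would appeal to the standard characterization of the sound-soft shape derivative. Since $v^0$ is independent of $D$, differentiating the Dirichlet condition $u_{th}=0$ along the moving boundary $\pa D_{th}$ shows that the shape derivative $u':=\tfrac{d}{dt}\big|_{t=0}u^s_{th}$ satisfies the Helmholtz equation in $\R^2\se(\ov{D}\cup\Gamma)$, the transmission conditions on $\Gamma$, the radiation condition, and
\[
u' = -\frac{\pa(v^0+u^s)}{\pa\nu}\,(h\cdot\nu)\quad\mbox{on }\pa D,
\]
which is precisely problem (\ref{eq:1}) with the boundary data stated in the theorem; its far-field pattern is then $u^{\prime,\infty}$. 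Combining this with the chain-rule reduction above gives $F^\prime_{d_1,d_2,k_{+},k_{-}}[\pa D,h]=2\real(\ov{u^{\infty}}u^{\prime,\infty})$.

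I expect the main obstacle to be the rigorous proof of Fr\'echet (as opposed to merely G\^ateaux) differentiability in the two-layered setting: one must show that the remainder is $o(\|h\|_{C^2(\pa D)})$ uniformly, control the $h$-dependence of the layer-potential kernels built from $G$, and verify that the smooth reflected part $H$ does not spoil the mapping properties and the invertibility used in the homogeneous-medium arguments (e.g. Theorem 3.11 in \cite{DK13} and the domain-derivative results underlying \cite{BLS}). Once the potential-theoretic estimates are transported from $\Phi$ to $G$, the remaining steps are the routine differentiation of the boundary integral equation and the chain-rule bookkeeping for $|u^{\infty}|^2$.
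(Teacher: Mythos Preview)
Your proposal is correct and follows essentially the same route the paper points to: the paper's own proof consists only of the sentence that the result can be proved similarly as in \cite{K93,Hettlich_1995}, i.e., by the standard domain-derivative arguments for the sound-soft problem, and your chain-rule reduction $|u^\infty|^2\mapsto 2\real(\ov{u^\infty}\,\delta u^\infty)$ together with the layered-medium adaptation via $G=\Phi+H$ is precisely the content those references supply (and is also the approach of \cite{BLS} on which the paper is modeled). In fact your sketch is more detailed than what the paper itself provides.
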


\begin{proof}
The statement of this theorem can be proved similarly as in \cite{K93,Hettlich_1995}.
\end{proof}

Next, we restrict ourselves to the case when the boundary $\pa D$ is a starlike curve, that is,
$\pa D$ has the form of the following suitable parametrization:
\be\label{eq:5.i}
\gamma=(a_1,a_2)+r(\theta)(\cos\theta,\sin\theta),\;\;\theta\in (0,2\pi],
\en
with its center at $(a_1,a_2)$.
In numerical computation, similarly to \cite{BLS}, we consider to use multiple sets of incident fields and thus rewrite (\ref{eq:5.1}) as the following perturbation equation:
\be\label{eq5.2}
F_{d^{(2)}_{1q},d^{(2)}_{2q},k_{+},k_{-}}[\gamma](\hat x)\approx|u_\delta^\infty(\hat x,d^{(2)}_{1q},d^{(2)}_{2q},k_+,k_-)|^2,\;q= 1,\ldots,n_d^{(2)},
\en
from a knowledge of the noisy phaseless far-field data
$u_\delta^\infty(\hat x,d^{(2)}_{1q},d^{(2)}_{2q},k_+,k_-)$, $d^{(2)}_{1q},d^{(2)}_{2q}\in\Sp^-_{\theta_c}$, $q=1,\ldots,n_d^{(2)}$, which satisfy that
\be\label{eq8}
\left\||u_\delta^\infty|^2-|u^\infty|^2\right\|_{L^2(\Sp^+_{\theta_c})}
\le\delta\left\||u^\infty|^2\right\|_{L^2(\Sp^+_{\theta_c})}
\en
with the noise level $\delta>0$.
Our Newton iteration method consists in using the Levenberg-Marquardt algorithm to solve
the linearized equation of (\ref{eq5.2})
\begin{align}\label{eq5.3}
F_{d^{(2)}_{1q},d^{(2)}_{2q},k_{+},k_{-}}[\gamma^{app}](\hat x)+F^\prime_{d^{(2)}_{1q},d^{(2)}_{2q},k_{+},k_{-}}[\gamma^{app},\Delta\gamma](\hat x)
\approx|u_\delta^\infty(\hat x,d^{(2)}_{1q},d^{(2)}_{2q},k_{+},k_{-})|^2,\;q=1,\ldots,n_d^{(2)},
\end{align}
for $\Delta\gamma$, where $\gamma^{app}=(a^{app}_1,a^{app}_2)+r^{app}(\theta)(\cos\theta,\sin\theta)$
is the approximation to $\gamma$.

Using the strategy in \cite{BLS}, $r^{app}$ is taken from a finite-dimensional subspace
$W_M\subset H^s(0,2\pi)$, $s\ge 0$, for practical numerical computation, where
\ben
W_M:=\{r\in H^s(0,2\pi):r(\theta)=\alpha_0+\sum^M_{l=1}\alpha_l\cos(l\theta)
+\alpha_{l+M}\sin(l\theta),\;\alpha_l\in\R,\;\;\text{for}\;l=0,\ldots,2M\}
\enn
with the norm
\ben
\|r\|^2_{H^s(0,2\pi)}:= 2\pi\alpha_0+\pi\sum^M_{l=1}(1+l^2)^s(\alpha_l^2+\alpha_{l+M}^2).
\enn
Then we seek the regularized solution $\Delta\gamma:=(\Delta a_1,\Delta a_2)+\Delta r(\theta)(\cos\theta,\sin\theta)$ of (\ref{eq5.3}) such that
$(\Delta a_1,\Delta a_2,\Delta r)$ is the solution of the minimization problem
\begin{align}\label{eq5.4}
\min_{\Delta a_1,\Delta a_2,\Delta r}
&\Bigg\{\sum^{n_d^{(2)}}_{q=1}\Big\|F_{d^{(2)}_{1q},d^{(2)}_{2q},k_{+},k_{-}}[\gamma^{app}](\hat x)
+F^\prime_{d^{(2)}_{1q},d^{(2)}_{2q},k_{+},k_{-}}[\gamma^{app},\Delta \gamma](\hat x)\notag\\
&-|u_{\delta}^{\infty}(\hat x,d^{(2)}_{1q},d^{(2)}_{2q},k_{+},k_{-})|^2\Big\|^2_{L^2(\Sp^+_{\theta_c})}
+\beta\left[\sum^2_{l=1}|\Delta a_l|^2+\|\Delta r\|^2_{H^s(0,2\pi)}\right]\Bigg\},
\end{align}
where the regularization parameter $\beta$ is chosen so that
\begin{align}\label{eq5.5}
&\sum^{n_d^{(2)}}_{q=1}\Big\|F_{d^{(2)}_{1q},d^{(2)}_{2q},k_{+},k_{-}}[\gamma^{app}](\hat x)+F^\prime_{d^{(2)}_{1q},d^{(2)}_{2q},k_{+},k_{-}}[\gamma^{app},\Delta\gamma](\hat x)
-|u_{\delta}^{\infty}(\hat x,d^{(2)}_{1q},d^{(2)}_{2q},k_{+},k_{-})|^2\Big\|^2_{L^2(\Sp^+_{\theta_c})}\notag \\
&\quad\quad\quad\quad\quad\quad=\rho^2 {\sum^{n_d^{(2)}}_{q=1}\Big\|F_{d^{(2)}_{1q},d^{(2)}_{2q},k_{+},k_{-}}[\gamma^{app}](\hat x)
-|u_{\delta}^{\infty}(\hat x, d^{(2)}_{1q},d^{(2)}_{2q},k_{+},k_{-})|^2\Big\|^2_{L^2(\Sp^+_{\theta_c})}}
\end{align}
for some parameter $\rho<1$
and $\beta$ is determined by using the bisection algorithm (see \cite{TH99}).
Thus the approximation $\gamma^{app}$ can be updated by $\gamma^{app}+\Delta\gamma$.

The stopping rule is provided by discrepancy principle (see \cite{TH99}), that is,
the iteration is stopped if $E_{k_{+},k_{-}}<\tau\delta$, where $\tau >1$ is a given constant and
the relative error $E_{k_{+},k_{-}}$ is defined by
\ben
E_{k_{+},k_{-}}=\frac{1}{n_d^{(2)}}\sum^{n_d^{(2)}}_{q=1}\frac{\Big\|F_{d^{(2)}_{1q},d^{(2)}_{2q},k_{+},k_{-}}[\gamma^{app}](\hat x)
-|u_{\delta}^{\infty}(\hat x, d^{(2)}_{1q},d^{(2)}_{2q},k_{+},k_{-})|^2\Big\|_{L^2(\Sp^+_{\theta_c})}}
{\Big\||u_{\delta}^{\infty}(\hat x, d^{(2)}_{1q},d^{(2)}_{2q},k_{+},k_{-})|^2\Big\|_{L^2(\Sp^+_{\theta_c})}}.
\enn

\begin{remark}\label{re3.2} {\rm For the numerical algorithm of this section,
we use the layered Green function method in \cite{Car17} to compute the synthetic data and the numerical solution in each iteration step.
To determine the location of the obstacle,
we use the measured data
$|u_{\delta}^\infty(\hat x_j,d^{(1)}_{1l},d^{(1)}_{2i},k^{(1)}_{+}, k^{(1)}_-)|$,
$j=1,\ldots,n_{f}$,
$l=1,\ldots,n^{(1)}_{d}$,
$i= 1,\ldots, n^{(1)}_{d}$, with fixed wave numbers $k^{(1)}_+>0$
and $k^{(1)}_-=\sqrt{n}k^{(1)}_{+}$.
For the iteration algorithm proposed in this section, we follow the idea in \cite{BLS}
to make use of the multi-frequency phaseless data $|u_{\delta}^{\infty}(\hat x_j,d^{(2)}_{1q},d^{(2)}_{2q},k^{(2)}_{+},k^{(2)}_{-})|$,
$j=1,\ldots,n_f$, $q=1,\ldots,n_d^{(2)}$,
with the wave numbers $k^{(2)}_+=k^{(2)}_{+,m}$ and $k^{(2)}_-={\sqrt{n}}k^{(2)}_{+,m}$, $m=1,\ldots,F$.
Here, $\hat{x}_j,d^{(1)}_{1l},d^{(1)}_{2i}$
are the same as in Section \ref{sec2},
$d^{(2)}_{1q},d^{(2)}_{2q}\in\Sp^{-}_{\theta_c}$
with $d^{(2)}_{1q}\ne d^{(2)}_{2q}$,
the multiple wave numbers satisfy $0<k^{(2)}_{+,1}<\cdots<k^{(2)}_{+,F}$ and
 $n$ is the refractive index.
Further, the norm $\|\cdot \|_{L^2(\Sp^+_{\theta_c})}$ can be approximated by
\[
\|f\|^2_{L^2(\Sp^+_{\theta_c})}\approx\frac{\pi-2\theta_c}{n_f}{\color{hw}{\sum^{n_f}_{j=1}}} |f(\hat x_j)|^2.
\]
}
\end{remark}

Based on the above discussions, our numerical algorithm for extend obstacles is presented in Algorithm \ref{A2}.

\begin{algorithm}
\caption{\textbf{Location and shape reconstruction of the extended obstacle}{\label{A2}}}
\SetAlgoLined
\KwIn{Noisy phaseless data $1$:\\
\qquad\quad$\;$ $|u_{\delta}^\infty(\hat x_j,d^{(1)}_{1l},d^{(1)}_{2i},k^{(1)}_{+}, \sqrt{n}k^{(1)}_{+})|,\;j=1,\ldots,n_{f}$,
$l=1,\ldots,n^{(1)}_{d}, i= 1,\ldots, n^{(1)}_{d}$.

\qquad\quad $\;$ Noisy phaseless data $2$:\\
\qquad\quad $\;$ $|u_{\delta}^{\infty}(\hat x_j,d^{(2)}_{1q},d^{(2)}_{2q},k^{(2)}_{+,m},{\sqrt{n}}k^{(2)}_{+,m})|$,
$j=1,\ldots,n_{f},\;q=1,\ldots,n^{(2)}_{d}$, $m=1,\ldots,F$.}
\KwOut{The location and shape of the obstacle.}

Set $k_+=k^{(1)}_+$ and $k_-=\sqrt{n}k^{(1)}_+$.

Locating the obstacle by Algorithm \ref{A1} with noisy phaseless data $1$.

Given the parameters $\tau,\rho$, choose the initial guess $\gamma^{app}$ to be a circle with radius $r_0$,
whose center is the local maximum of the imaging result by Algorithm \ref{A1}.\\
\For{$k^{(2)}_{+}=k^{(2)}_{+,1},\ldots,k^{(2)}_{+,F}$}
{Set $k_+=k^{(2)}_{+}$ and $k_-=\sqrt{n}k^{(2)}_{+}$.

\While {$E_{k_{+},k_-}\geq\tau\delta$}
{Use the strategy (\ref{eq5.5}) to solve (\ref{eq5.4}) with noisy phaseless data $2$ to
update the approximation $\gamma^{app}$ as $\gamma^{app}=\gamma^{app}+\Delta\gamma$.
}
}
\end{algorithm}

\begin{remark}\label{re3.3} {\rm
Algorithm \ref{A2} can be extended to reconstruct extended scatterers which consist of several
connected components.
In this case, we assume that each component has the parametrization given in (\ref{eq:5.i}).
}
\end{remark}

\section{Numerical experiments}\label{sec4}

\subsection{Locating multiple small scatterers}

We first present several numerical examples to illustrate the applicability of the direct imaging
algorithm (i.e., Algorithm \ref{A1}) for imaging small scatterers. To generate the synthetic data,
the direct scattering problem is solved by the layered Green function method proposed in \cite{Car17}.
As for the far-field data, {\color{hw}it is} measured with $256$ incident and observed directions
which are uniformly distributed on $\Sp^{-}_{\theta_c}$ and $\Sp^{+}_{\theta_c}$, respectively, {\color{hw} that is, $n_f=256$ and $n^{(1)}_d= 256$}.
Further, the noisy phaseless data $u^{\infty}_{\delta}(\hat x_j)$, $j=1,\ldots,n_f$, {\color{hw}are} given as
\begin{align}\label{eq6.1}
|u^{\infty}_{\delta}(\hat x_j)|^2=|u^{\infty}(\hat x_j)|^2
+\delta\frac{\xi_j}{\sqrt{\sum^{n_f}_{p=1}|\xi_p|^2}}
\sqrt{\sum\nolimits^{n_f}_{p=1}|u^{\infty}(\hat x_p)|^4},\;
j=1,\ldots,n_f,
\end{align}
where $\delta$ is noise level and $\xi={(\xi_j)}_{j=1,\ldots, n_f}$ with $\xi_j$ being standard normal distribution.

\textbf{Example 1: Locating a small obstacle.}
We consider the scattering problem by a circle buried in the lower half-space. Our aim is to show
that the numerical performance of the imaging function $I_A(z,k_+,k_-)$ is consistent with our analysis
in Section \ref{sec2}, as shown in Figure \ref{fig1}. In Figure \ref{fig1}, we consider the circle with
radius $0.1$ and center at $(-3,-3)$, and the sampling region is chosen to be $[-4.5,4.5]\times[-4.5,4.5]$.
Figure \ref{fig1}(a) presents the exact position of the small circle.
Figure \ref{fig1}(b) shows the reconstruction result from the measured data
with $10\%$ noise in the case $k_{+}<k_{-}$ with $k_{+}=10\pi$ and $k_-=1.45k_{+}$.
Figure \ref{fig1}(c) presents the corresponding imaging result from the measured data with $10\%$ noise
in the case $k_{+}>k_{-}$ with $k_{+}=15\pi$ and $k_{-}=k_{+}/1.5$.
It is observed that $I_A(z,k_+,k_-)$ takes a large value in the neighborhood of $(-3,-3)$ and $(3,3)$, which is
consistent with the analysis in Section \ref{sec2}. With the aid of the a priori information
that the obstacle is buried in the lower half-space, we can determine that the position of the small obstacle is $(-3,3)$.

\begin{figure}[htbp]
\centering
\subfigure[]
{
\begin{minipage}[t]{0.26\textwidth}
\centering
\includegraphics[width=\textwidth]{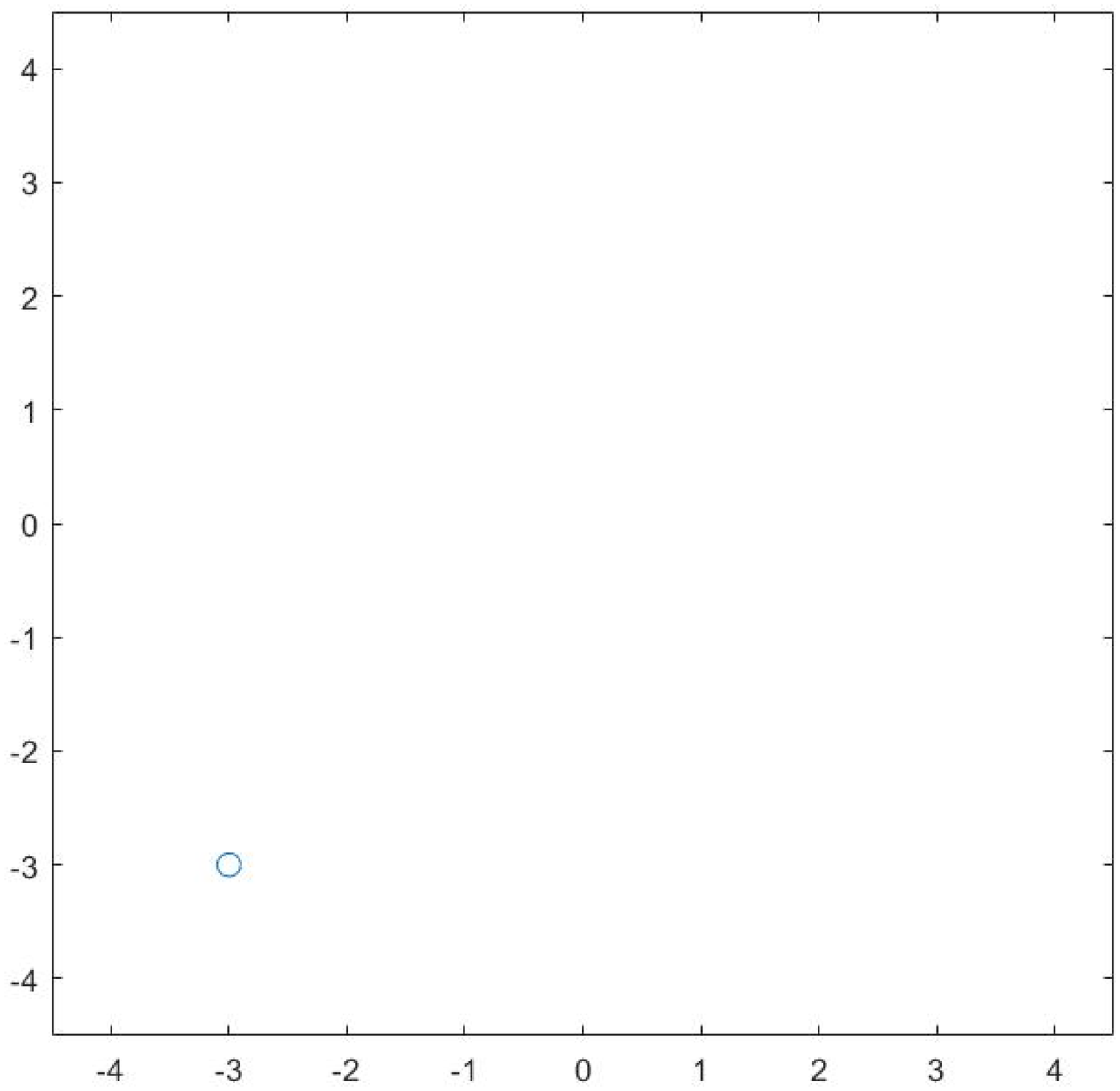}
\end{minipage}%
}%
\hfill
\subfigure[]
{
\begin{minipage}[t]{0.32\textwidth}
\centering
\includegraphics[width=\textwidth]{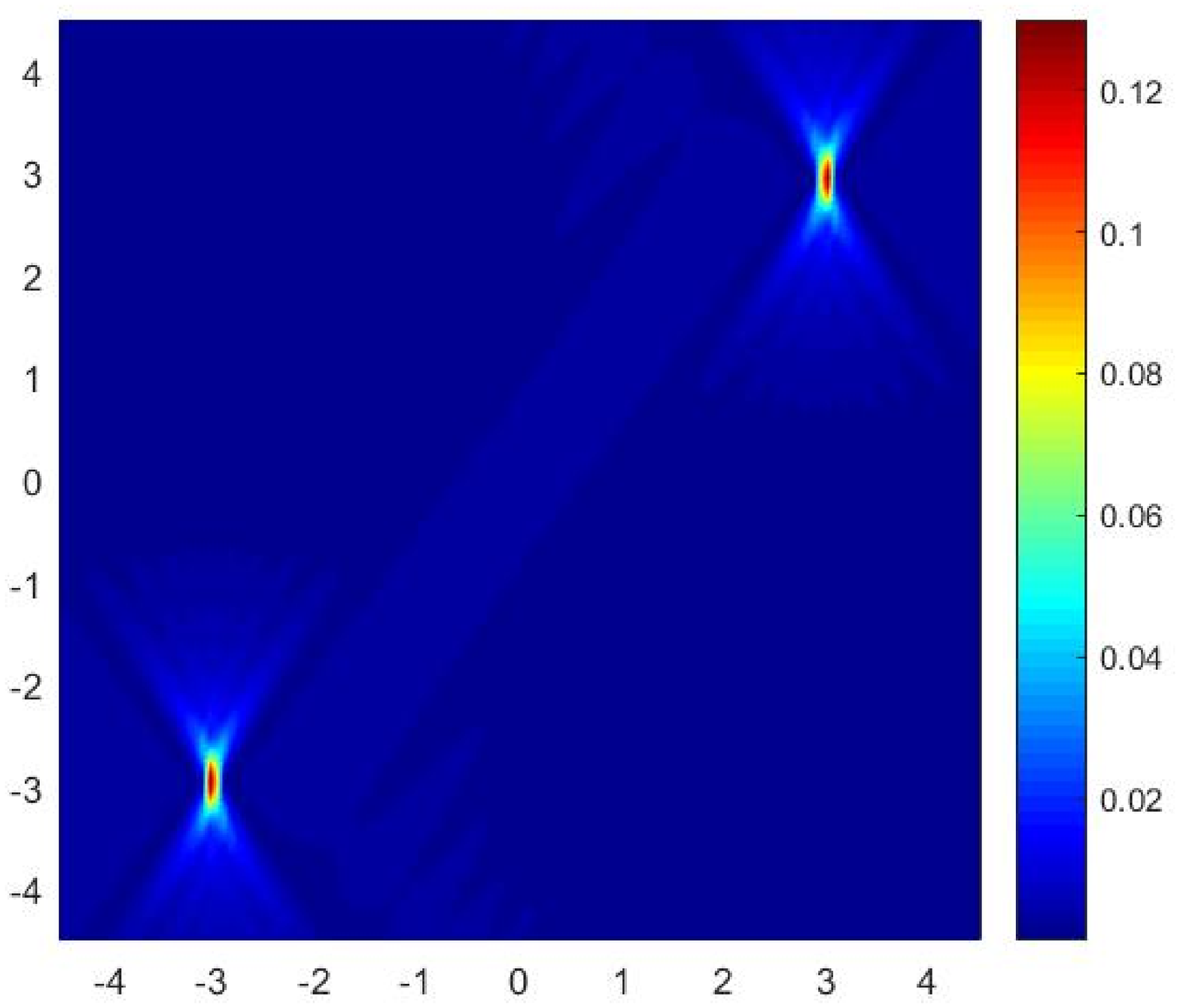}
\end{minipage}%
}%
\hfill
\subfigure[]
{
\begin{minipage}[t]{0.32\textwidth}
\centering
\includegraphics[width=\textwidth]{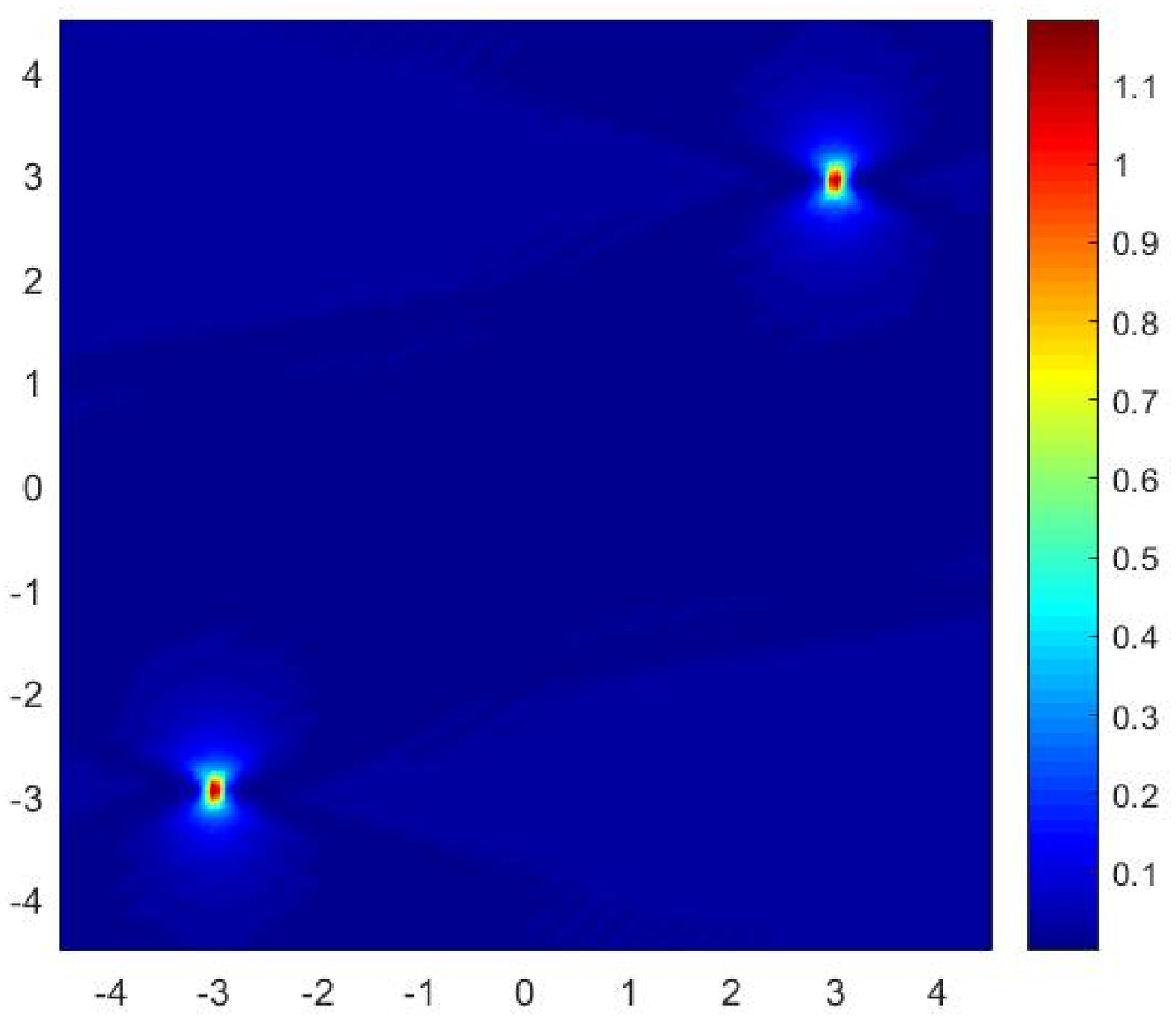}
\end{minipage}%
}%
\centering
\caption{Imaging results of a small scatterer by Algorithm \ref{A1} using phaseless far-field data
with 10\% noise: (a) True small scatterer, (b)-(c) Imaging results of a small scatterer at $k_+=10\pi$
and $k_-=1.45k_{+}$, and at $k_+=15\pi$ and $k_{-}=k_{+}/1.5$, respectively.
}\label{fig1}
\end{figure}


\textbf{Example 2: Locating multiple small anomalies in the case $k_{+}<k_{-}$.}
Consider three small circles with radius $0.1$ and centers at $(-2,-7),(0,-6),(3,-5)$, respectively.
Here, we choose $k_{+}=10\pi$ and $k_{-}=1.45k_{+}$. The sampling region is taken to be $[-4.5,4.5]\times[-9,0]$. Figure \ref{fig2}(a) gives the actual position of the three small circles. Figures \ref{fig2}(b) and
\ref{fig2}(c) show the imaging results of the imaging function (\ref{eq:2.5}) with using the measured
data with $5\%$ noise and with $10\%$ noise, respectively.
It is clearly seen from Figure \ref{fig2} that the location and number of the three small scatterers {\color{hw}{are}} very well retrieved.

\begin{figure}[htbp]
\centering
\subfigure[]
{
\begin{minipage}[t]{0.26\textwidth}
\centering
\includegraphics[width=\textwidth]{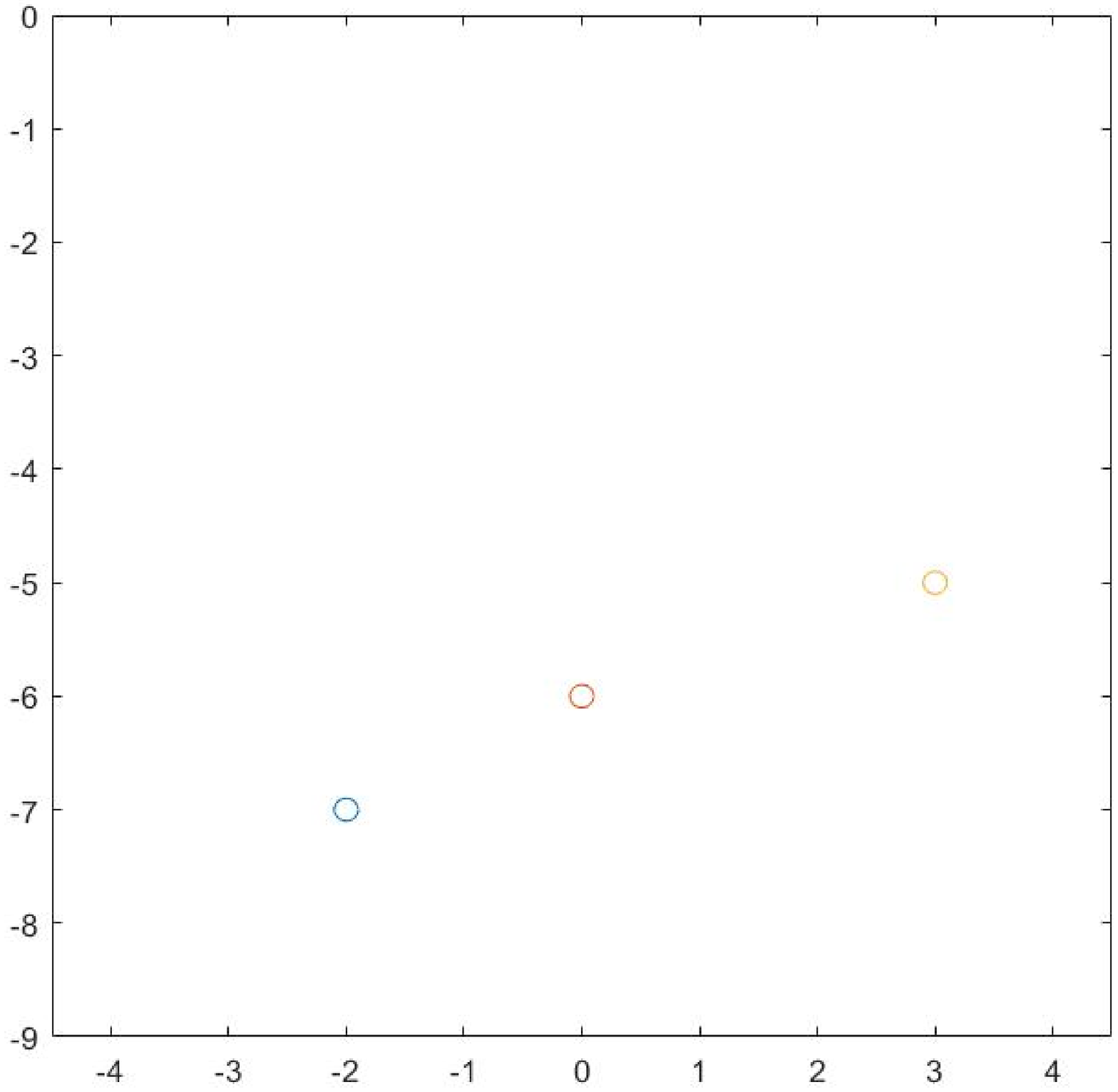}
\end{minipage}%
}%
\hfill
\subfigure[]
{
\begin{minipage}[t]{0.32\textwidth}
\centering
\includegraphics[width=\textwidth]{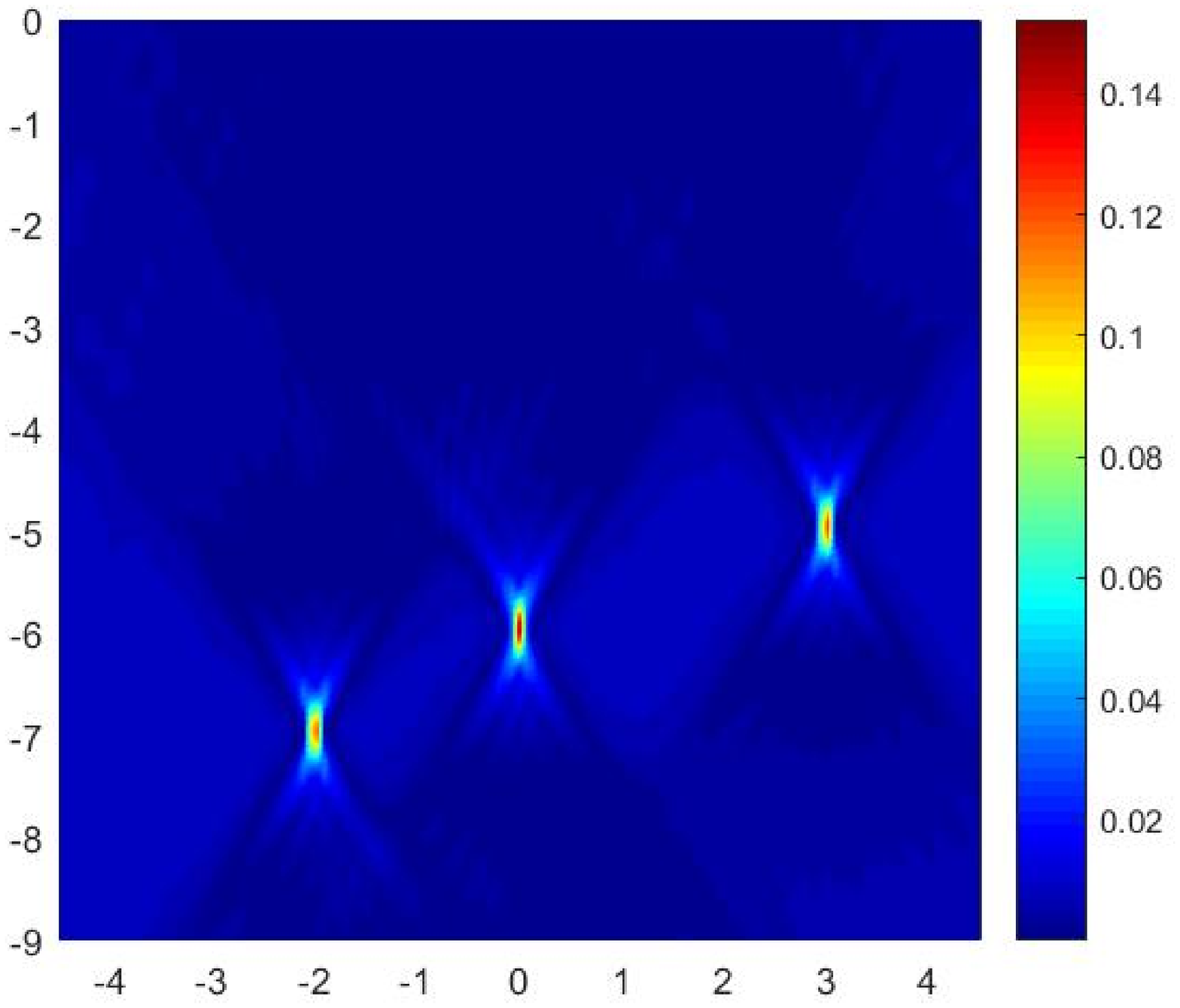}
\end{minipage}%
}%
\hfill
\subfigure[]
{
\begin{minipage}[t]{0.32\textwidth}
\centering
\includegraphics[width=\textwidth]{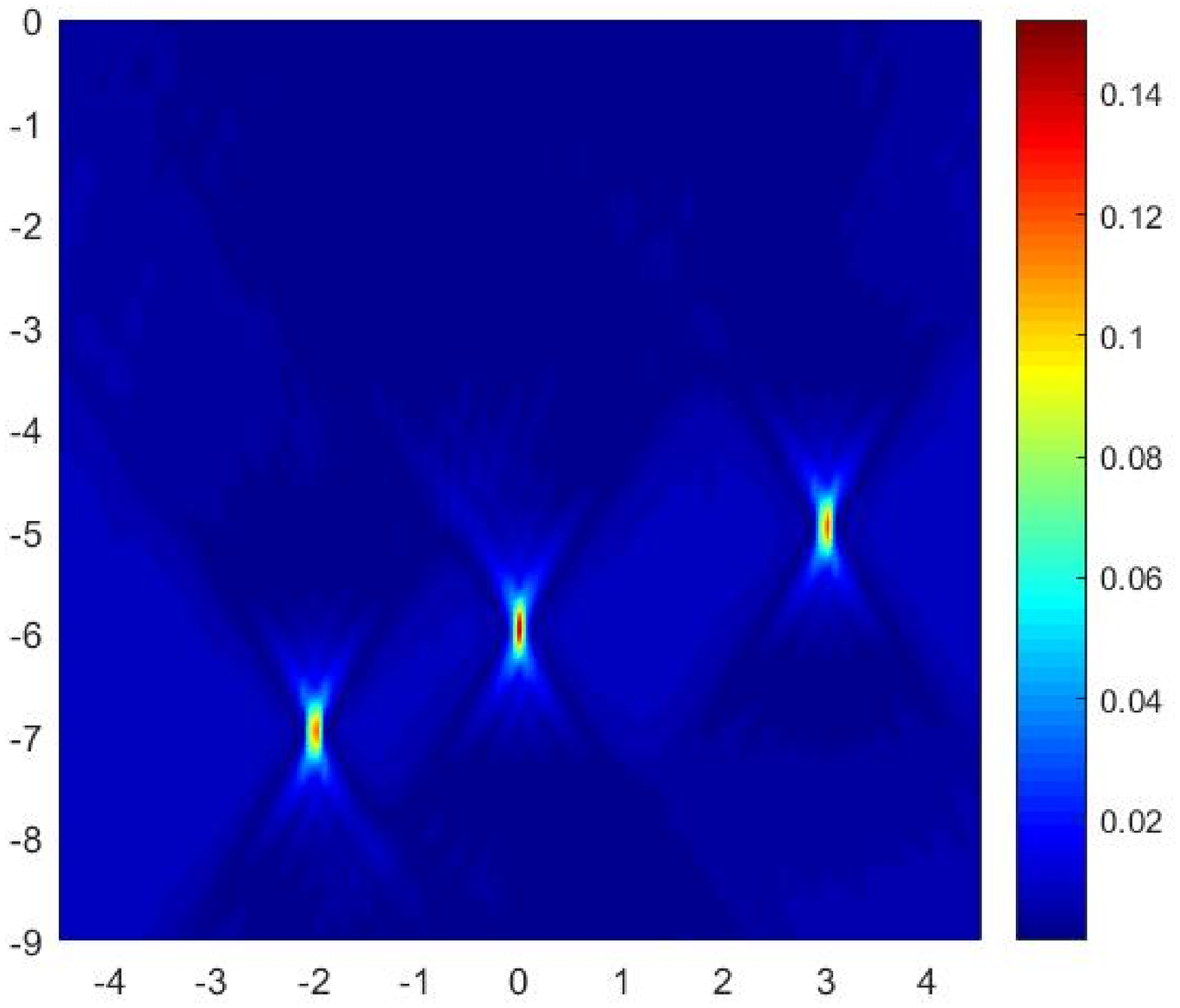}
\end{minipage}%
}%
\centering
\caption{Imaging results of multiple small scatterers by Algorithm \ref{A1} with phaseless far-field
data with (b) $5\%$ noise and (c) $10\%$ noise for the case $k_{+}=10\pi$ and $k_{-}=1.45k_{+}$,
where (a) shows the true scatterers.
}\label{fig2}
\end{figure}

\textbf{Example 3: Locating multiple small anomalies in the case $k_{+}>k_{-}$.}
Consider three small circles with radius $0.1$ and centers at $(-3,-8),(0,-2),(3,-5)$, respectively.
The wave numbers are chosen as $k_{+}=15\pi$ and $k_{-}=k_{+}/1.5$.
The sampling region is again taken to be $[-4.5,4.5]\times[-9,0]$. Figure \ref{fig3} presents
the exact position of the three multiple small anomalies and the imaging results given by the imaging
function $I_A(z,k_+,k_-)$ from the measured data with $5\%$ noise and with $10\%$ noise, respectively.
Similar to the case $k_{+}<k_{-}$ in Example 2, the location and number of the three unknown small scatterers
are satisfactorily obtained.

\begin{figure}[htbp]
\centering
\subfigure[]
{
\begin{minipage}[t]{0.26\textwidth}
\centering
\includegraphics[width=\textwidth]{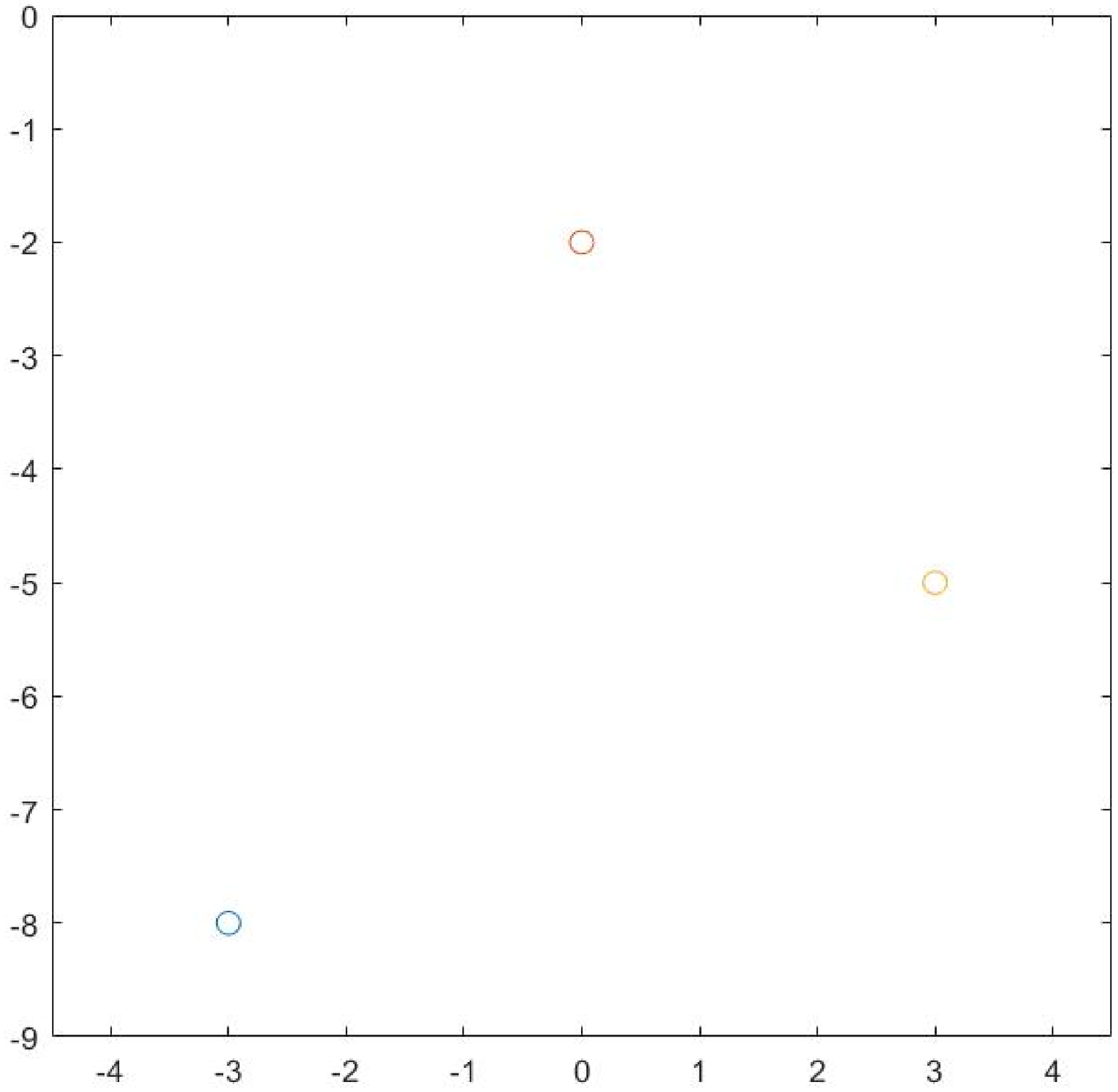}
\end{minipage}%
}%
\hfill
\subfigure[]
{
\begin{minipage}[t]{0.32\textwidth}
\centering
\includegraphics[width=\textwidth]{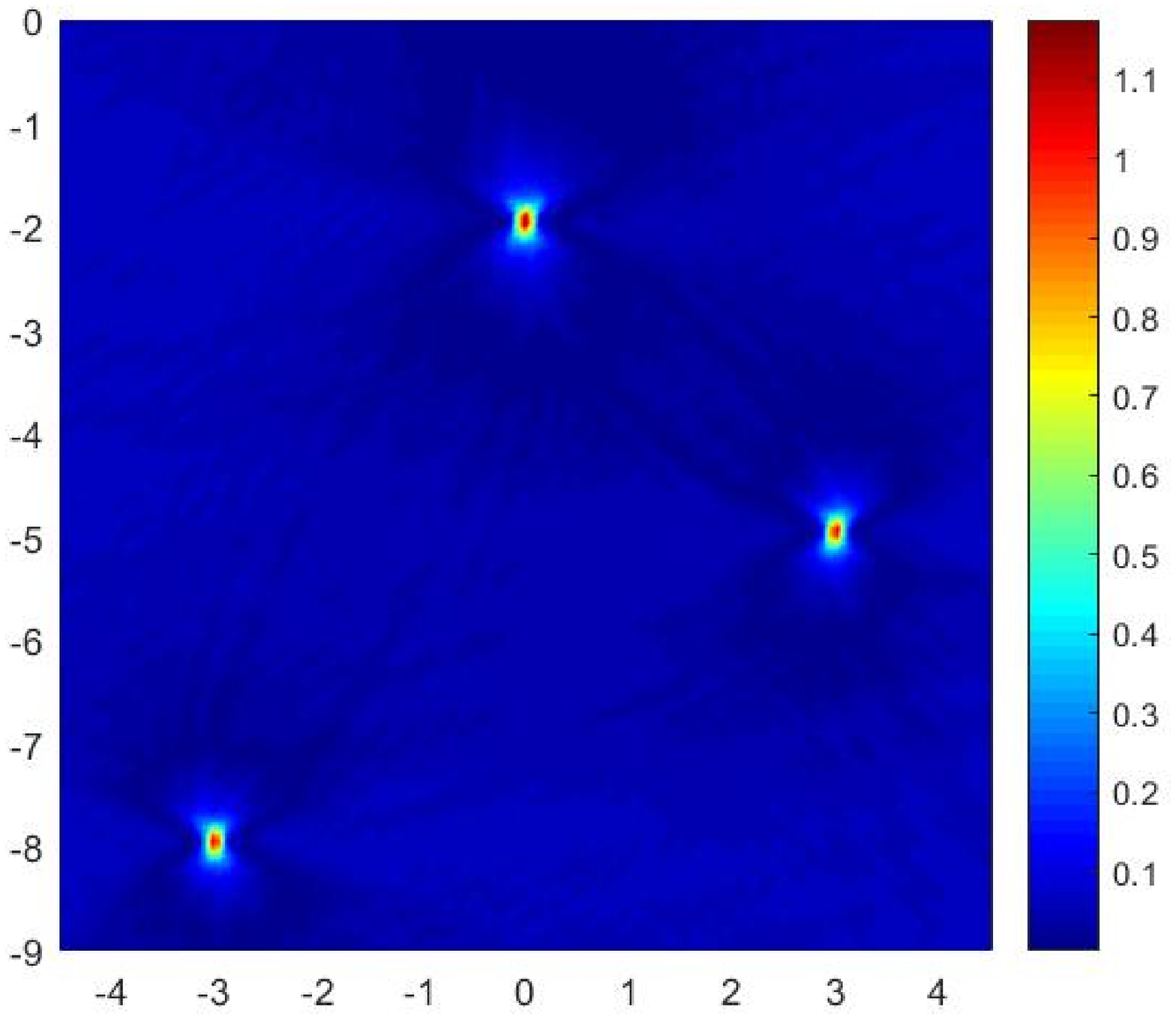}
\end{minipage}%
}%
\hfill
\subfigure[]
{
\begin{minipage}[t]{0.32\textwidth}
\centering
\includegraphics[width=\textwidth]{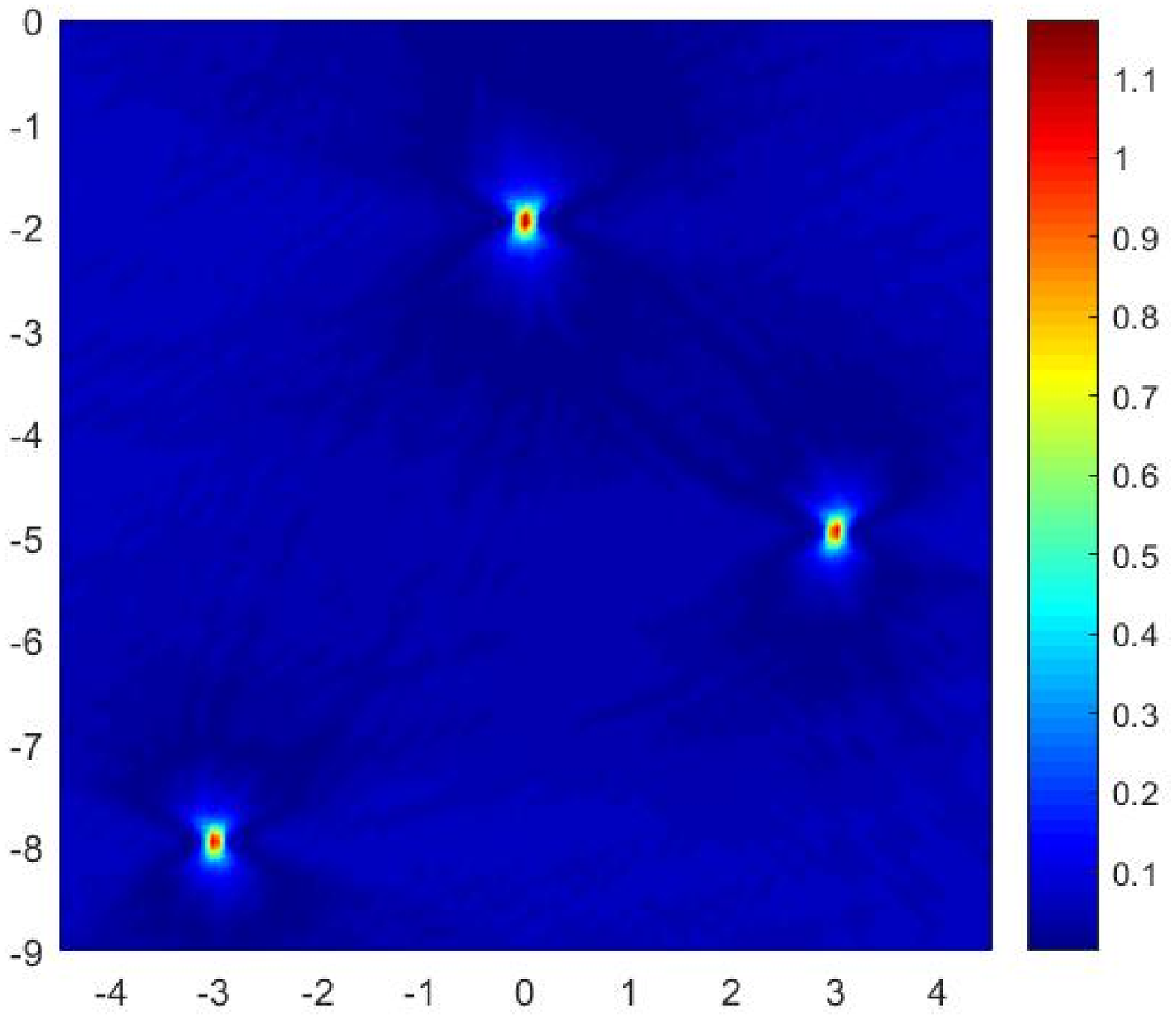}
\end{minipage}%
}%
\centering
\caption{Imaging results of multiple small scatterers by Algorithm \ref{A1} with phaseless far-field
data with (b) $5\%$ noise and (c) $10\%$ noise for the case $k_{+}=15\pi$ and $k_{-}=k_{+}/1.5$,
where (a) shows the true scatterers.}\label{fig3}
\end{figure}

\subsection{Location and shape reconstruction of extended obstacles}

We now carry out numerical implementation for Algorithm \ref{A2} presented in Section \ref{sec3}.
Shape reconstruction of the obstacles buried in the lower half-space will be considered in two cases:
$k_{+}>k_{-}$ and $k_{+}<k_{-}$.  The corresponding far-field pattern is computed by the layered Green
function method given in \cite{Car17} with the number of collocation points doubled in order to
avoid inverse crime. {\color{hw}Noisy phaseless data with noise level $\delta = 4\%$ are} simulated by using (\ref{eq6.1}), which
satisfy the condition (\ref{eq8})  approximately.
In all numerical examples,
we choose the parameters {\color{hw}$n_f=256$, $n^{(1)}_d=256$, $s=1.6$, $\rho=0.935$, $M=25$, $r_0=0.35$ and $\tau= 1.45$ in Algorithm \ref{A2}.} 
In the case $k_{+}>k_{-}$,  we choose
the refractive index $n=1/4$ and use multi-frequency data  with $k^{(2)}_{+}=1.5,3,6,10,14,18,22,26,30$. {\color{hw}{And the noisy phaseless far-field data 2 are generated by the incident waves
with three different sets of incident directions}} (i.e., $n_d^{(2)}=3$) with $d^{(2)}_{11}=(\cos\theta_c,-\sin\theta_c)$, $d^{(2)}_{21}=(-\cos\theta_c,-\sin\theta_c)$,
$d^{(2)}_{12}=(\cos\theta_c,-\sin\theta_c)$, $d^{(2)}_{22}=(\cos(-{3\pi}/{4}+{\theta_c}/2),\sin(-{3\pi}/4+{\theta_c}/2))$,
$d^{(2)}_{13}=(0,-1)$, $d^{(2)}_{23}=(\cos(-{\pi}/{4}-{\theta_c}/2),\sin(-\pi/4-{\theta_c}/2))$.
In the case $k_{+}<k_{-}$, the refractive index is chosen as $n={{1.45}^2}$, and we use multi-frequency
{\color{hw}phaseless data 2} with $k^{(2)}_{+}=0.8,1.5,2,3,4,5,7,11,13$. {\color{hw}{And the noisy phaseless far-field data 2 are  measured
by using four different sets of incident directions}} (i.e. $n^{(2)}_d=4$) with
$d^{(2)}_{11}=(\cos({\pi}/{300}),-\sin({\pi}/{300}))$, $d^{(2)}_{21}=(-{\sqrt{2}}/{2}, -{\sqrt{2}}/{2})$,
$d^{(2)}_{12}=(0,-1)$, $d^{(2)}_{22}=({\sqrt{2}}/{2}, -{\sqrt{2}}/{2})$, $d^{(2)}_{13}=(\cos({\pi}/{400}),-\sin({\pi}/{400}))$,
$d^{(2)}_{23}=({\sqrt{2}}/2, -{\sqrt{2}}/2)$, $d^{(2)}_{14}=(0,-1)$, $d^{(2)}_{24}=(-{\sqrt{2}}/2, -{\sqrt{2}}/2)$.
We recall that, according to the settings in Section \ref{sec3},
if the wave number $k_+$ in $\R^2_+$ is given by
$k_+=k^{(i)}_+$ ($i=1,2$)
then the wave number $k_-$ in $\R^2_-$ is given by
$k_-=k^{(i)}_-:=\sqrt{n}k^{(i)}_+$.
The parametrization of the test curves for the boundary $\pa D$ is given in Table \ref{table1}.

\begin{table}[htbp]
\centering
\begin{tabular}{ll}
\hline Type & Parametrization \\
\hline
Ellipse   & $(\cos t-5,1.35\sin t-6), t\in[0,2\pi]$ \\
Apple shaped     &   $[0.5+0.4\cos t+0.1\sin (2t)/(1+0.7\cos t)](\cos t, \sin t)-(0,4),t\in[0,2\pi]$  \\
Rounded triangle   & $(1+0.15\cos(3t))(\cos t, \sin t)-(2,2), t\in[0,2\pi] $\\
Rounded square     &$(0.6\cos^3(t)+ 0.6\cos t+1.5, 0.6\sin^3(t)+ 0.6\sin t-4.2),t\in[0,2\pi]$\\
\hline
\end{tabular}
\caption{Parametrization of the curves}\label{table1}
\end{table}

\textbf{Example 4: Reconstruction of an obstacle in the case $k_+>k_-.$}
%
%
Consider the inverse problem for reconstructing the apple-shaped obstacle in the case $k_{+}>k_{-}$.
Figure \ref{fig4}(a) presents the imaging result in the sampling region $[-2.5,2.5]\times[-6.5,-1.5]$
by the direct imaging algorithm with $k^{(1)}_+=10$, whose local maximum is at $(0.16,-3.69)$.
Figures \ref{fig4}(b) and \ref{fig4}(c) present the initial curve and the reconstruction result at $k^{(2)}_+=30$,
respectively,
where the solid line represents the exact curve. It can be seen
from Figure \ref{fig4}(c) that the location and shape of
the obstacle are satisfactorily reconstructed.

\begin{figure}[htbp]
\centering
\subfigure[]
{
\begin{minipage}[t]{0.36\textwidth}
\centering
\includegraphics[width=\textwidth]{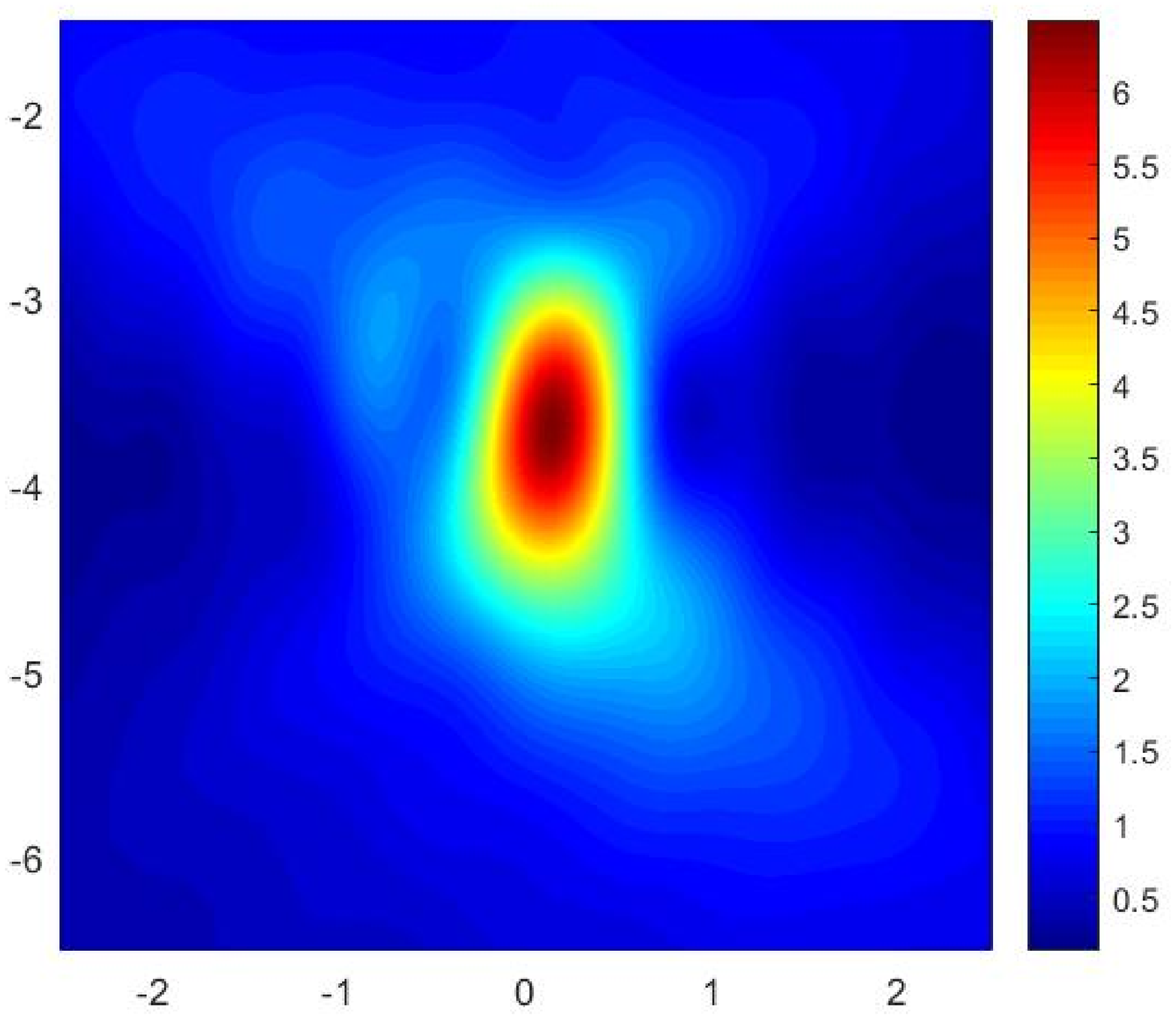}
\end{minipage}%
}%
\hfill
\subfigure[]
{
\begin{minipage}[t]{0.28\textwidth}
\centering
\includegraphics[width=\textwidth]{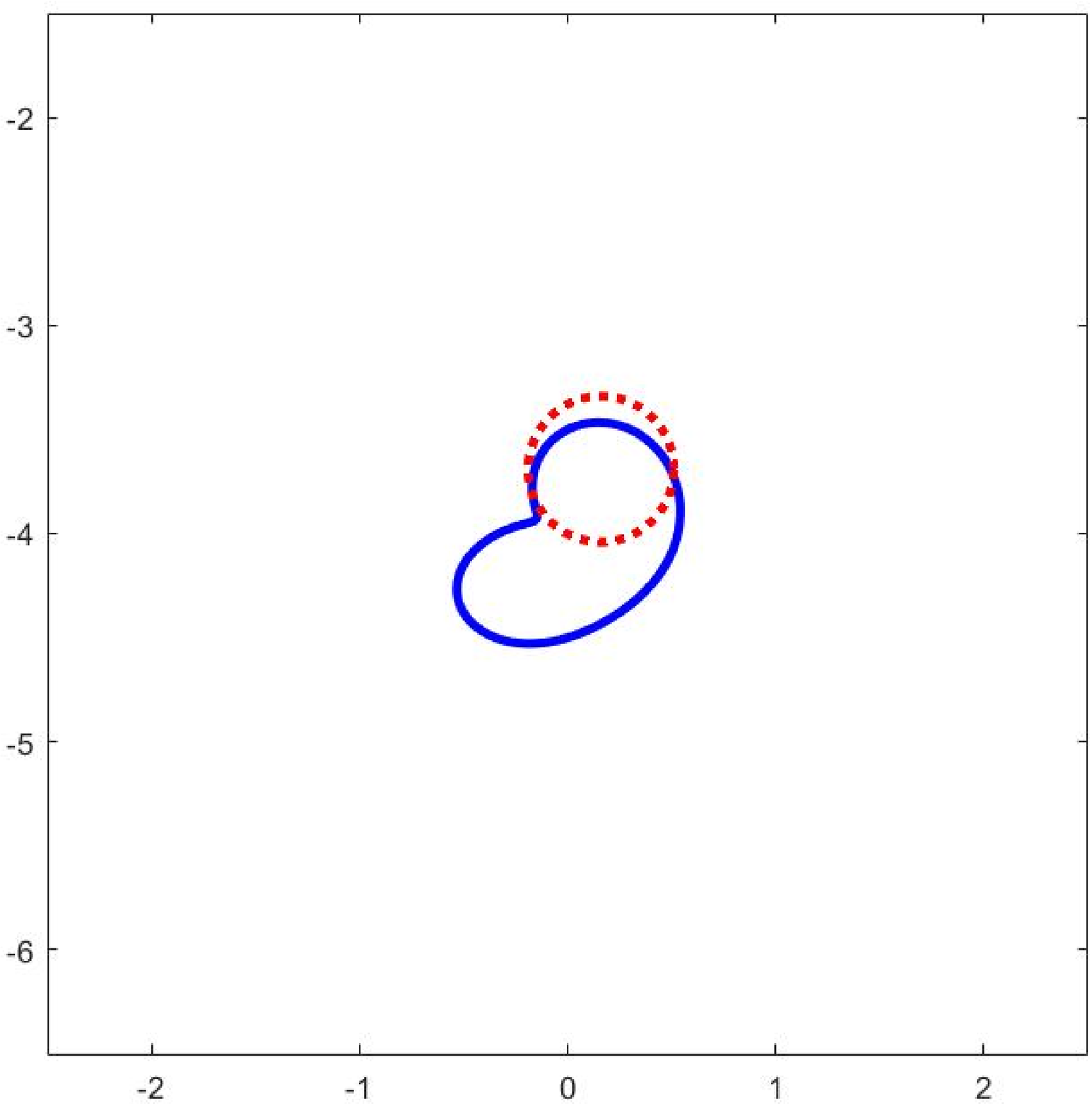}
\end{minipage}%
}
\hfill
\subfigure[]
{
\begin{minipage}[t]{0.28\textwidth}
\centering
\includegraphics[width=\textwidth]{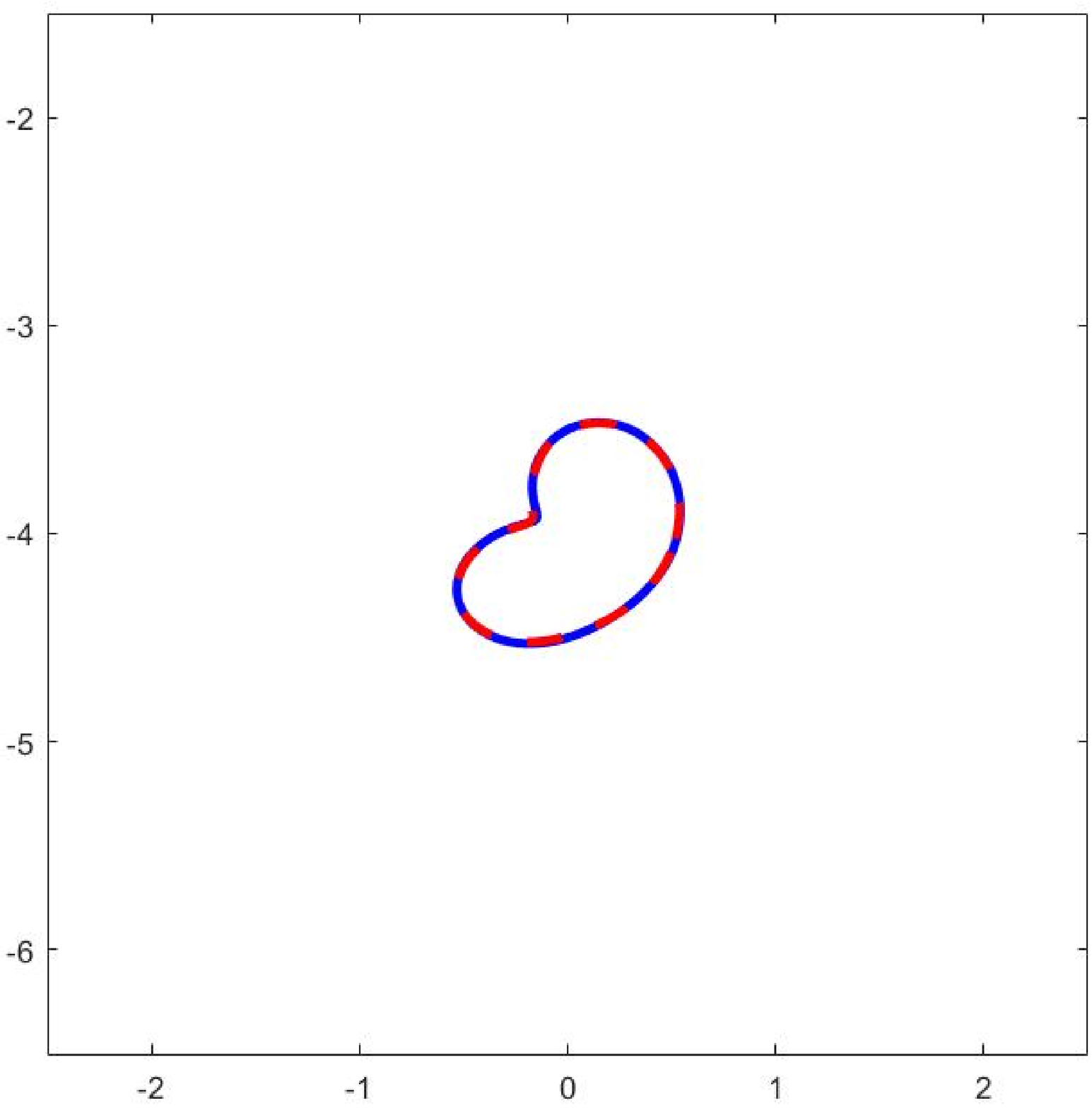}
\end{minipage}%
}%
\centering
\caption{Location and shape reconstruction of an apple-shaped obstacle from the phaseless far-field data
with $4\%$ noise in the case $k_+>k_-$: (a) The reconstruction result by Algorithm \ref{A1} at $k^{(1)}_+=10$ and
$k^{(1)}_-=k^{(1)}_+/2$, (b) The initial curve for Algorithm \ref{A2}, (c) The reconstructed obstacle
by Algorithm \ref{A2} at $k^{(2)}_+=30$ and $k^{(2)}_-=k^{(2)}_+/2$.}\label{fig4}
\end{figure}

\textbf{Example 5: Reconstruction of an obstacle in the case $k_{+}<k_{-}$.}
%
Consider the inverse problem in the case $k_{+}<k_{-}$,
where the obstacle is
the same as in Example 4.
Figure \ref{fig5}(a) presents the imaging result by the direct imaging method with $k^{(1)}_+=10$,
whose local maximum is at $(0.16,-3.69)$. Figures \ref{fig5}(b) and \ref{fig5}(c) present the initial curve and
the reconstruction result at $k^{(2)}_+=13$, respectively, where the solid line represents the exact curve.
It can be seen from Figure \ref{fig5}(c) that only the upper part of the obstacle can be satisfactorily reconstructed.

\begin{figure}[htbp]
\centering
\subfigure[]
{
\begin{minipage}[t]{0.36\textwidth}
\centering
\includegraphics[width=\textwidth]{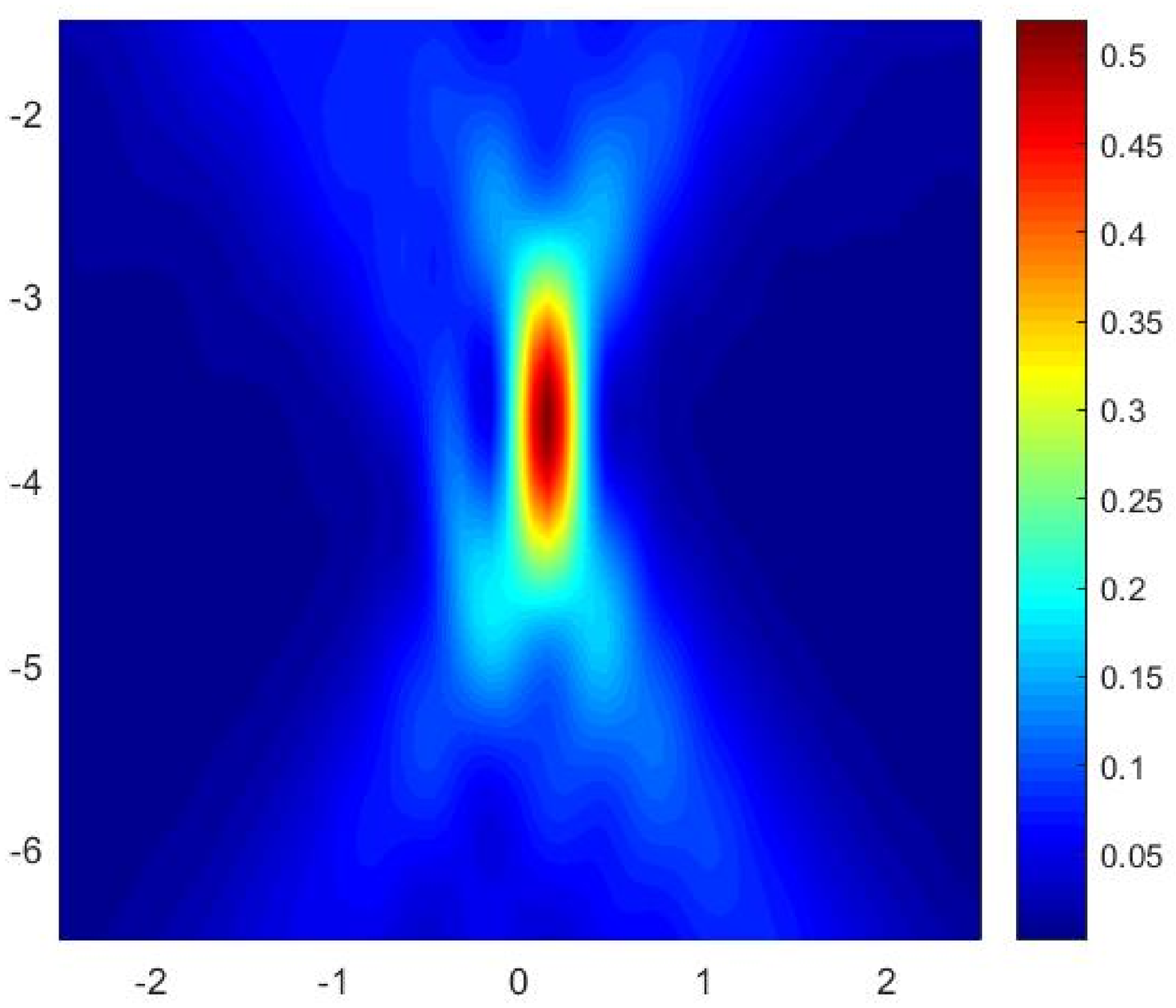}
\end{minipage}%
}%
\hfill
\subfigure[]
{
\begin{minipage}[t]{0.28\textwidth}
\centering
\includegraphics[width=\textwidth]{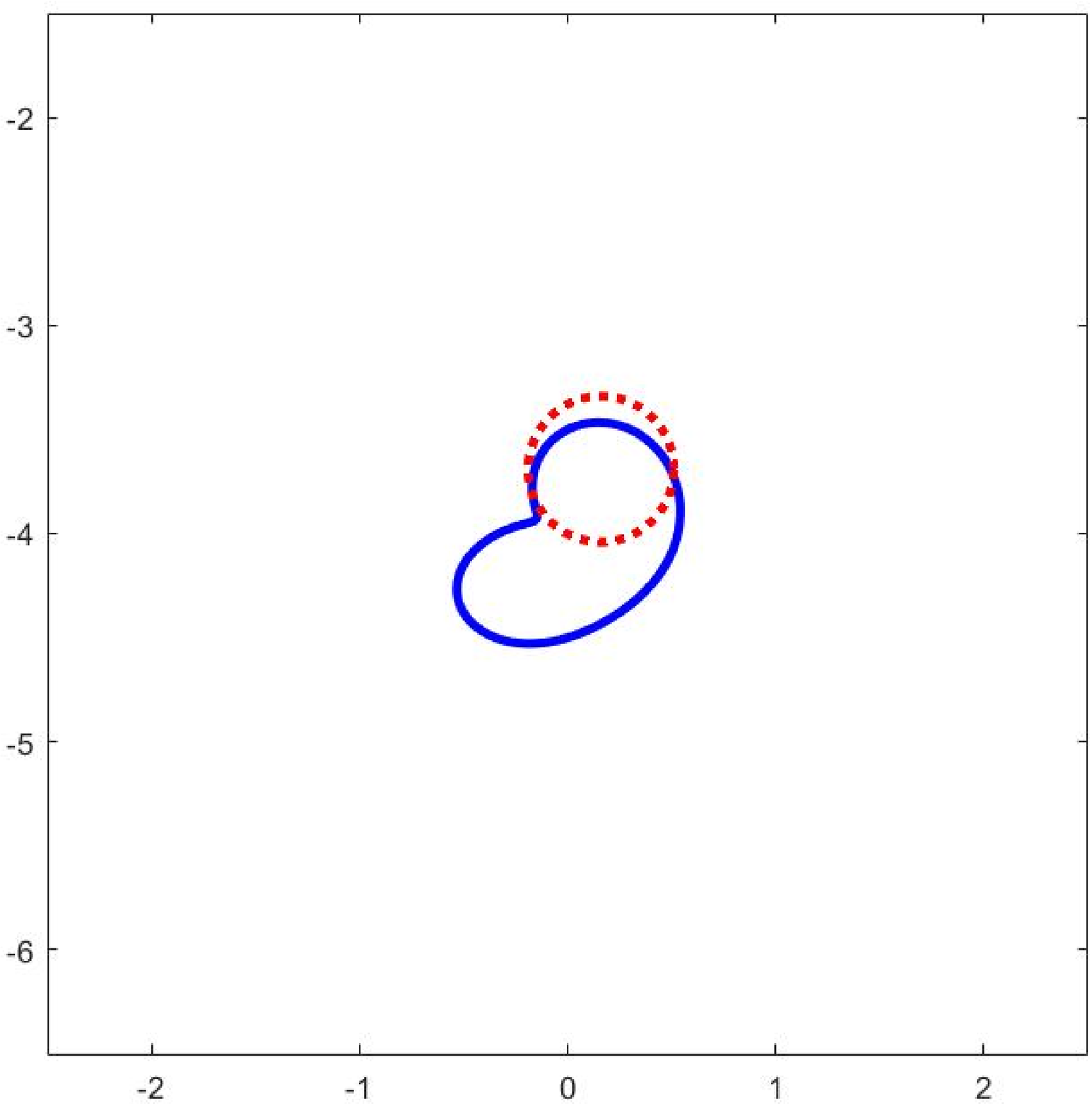}
\end{minipage}%
}%
\hfill
\subfigure[]
{
\begin{minipage}[t]{0.28\textwidth}
\centering
\includegraphics[width=\textwidth]{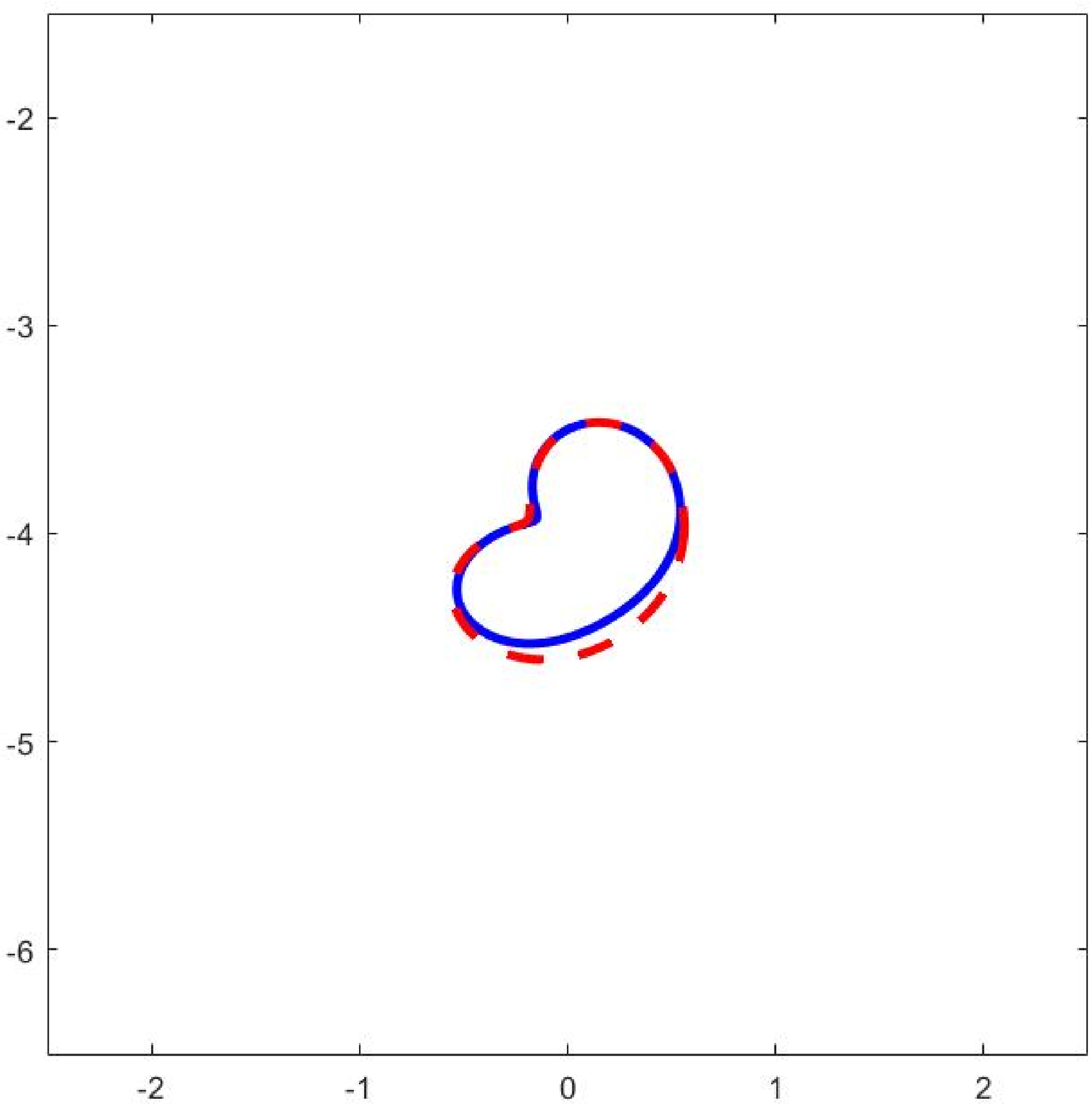}
\end{minipage}%
}%
\centering
\caption{Location and shape reconstruction of an apple-shaped obstacle from the phaseless far-field data
with $4\%$ noise in the case $k_+<k_-$: (a) The reconstruction result by Algorithm \ref{A1} at $k^{(1)}_+=10$ and
$k^{(1)}_{-}=1.45k^{(1)}_{+}$, (b) The initial curve for Algorithm \ref{A2}, (c) The reconstructed obstacle
by Algorithm \ref{A2} at $k^{(2)}_+=13$ and $k^{(2)}_{-}=1.45k^{(2)}_{+}$.}\label{fig5}
\end{figure}

\textbf{Example 6: Reconstruction of multiple obstacles in the case $k_{+}>k_{-}$.}
%
We now consider the inverse problem for reconstructing multiple obstacles consisting of an ellipse-shape,
a rounded triangle-shape and a rounded square-shape in the case $k_{+}>k_{-}$.
Figure \ref{fig6}(a) presents the imaging result by Algorithm \ref{A1} with $k^{(1)}_{+}=30$, whose local maximums
are at $(-1.97,-1.25),(-4.94,-4.85),(1.47,-3.13)$, respectively. Figures \ref{fig6}(b) and \ref{fig6}(c) present the initial curve
and the reconstruction result at $k^{(2)}_{+}=30$,
respectively, where the solid line represents the exact curve.
It is seen from Figure \ref{fig6}(c) that the location and shape of the rounded triangle-shaped and ellipse-shaped obstacles
are satisfactorily reconstructed. However, the lower part of the rounded square-shaped
obstacle is not very accurately reconstructed compared with its upper part.

\begin{figure}[htbp]
\centering
\subfigure[]
{
\begin{minipage}[t]{0.36\textwidth}
\centering
\includegraphics[width=\textwidth]{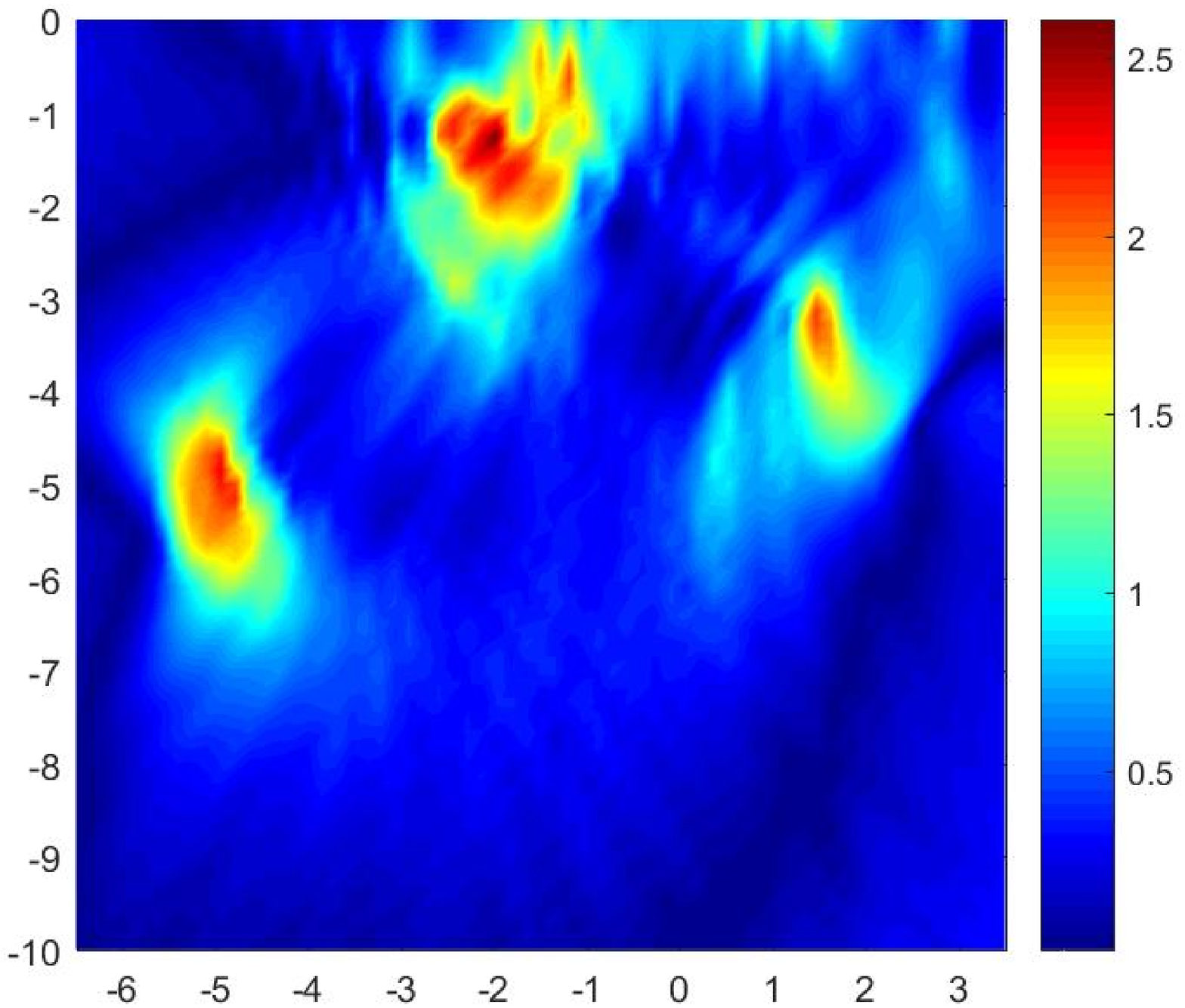}
\end{minipage}%
}%
\hfill
\subfigure[]
{
\begin{minipage}[t]{0.28\textwidth}
\centering
\includegraphics[width=\textwidth]{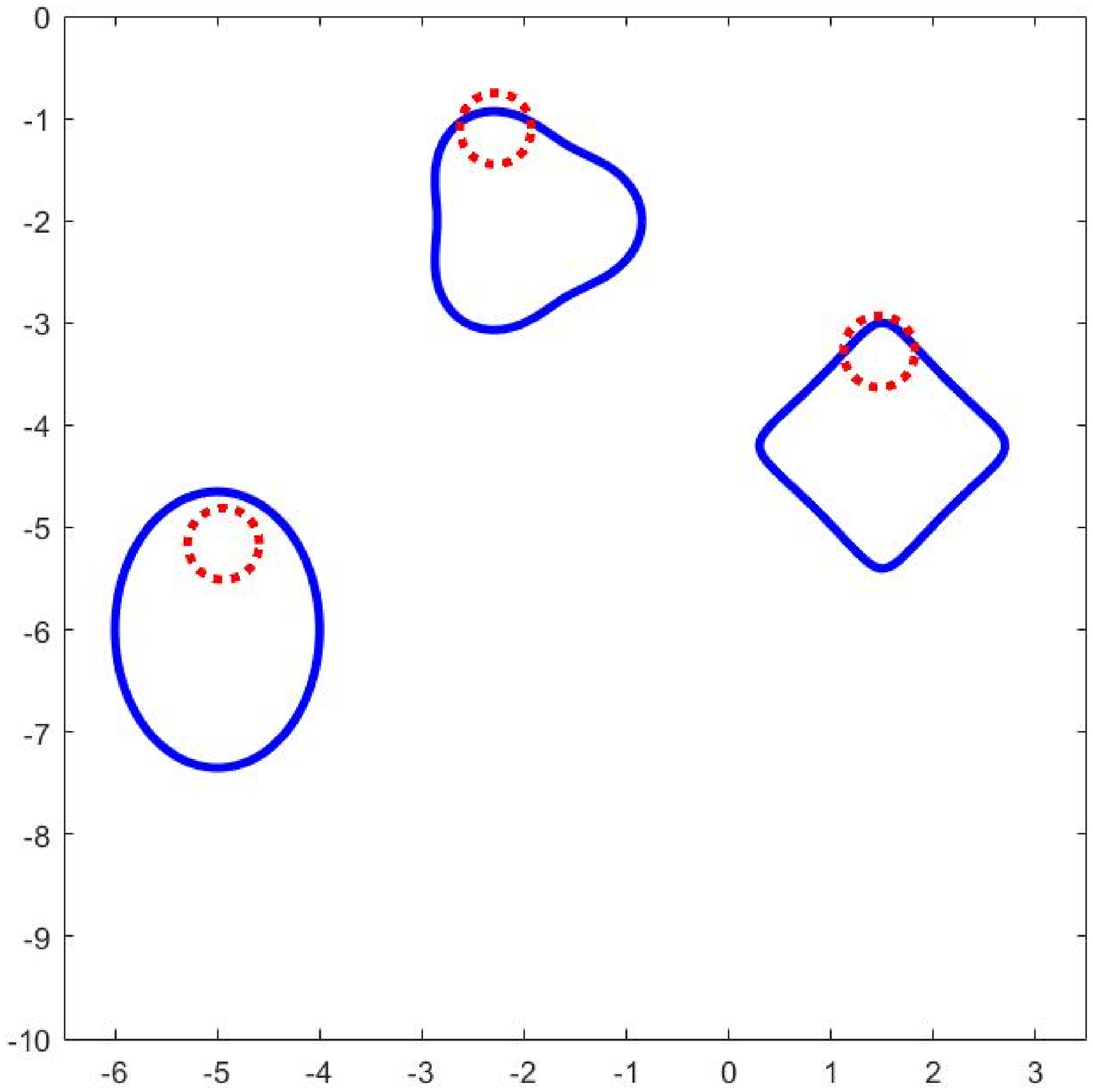}
\end{minipage}%
}%
\hfill
\subfigure[]
{
\begin{minipage}[t]{0.28\textwidth}
\centering
\includegraphics[width=\textwidth]{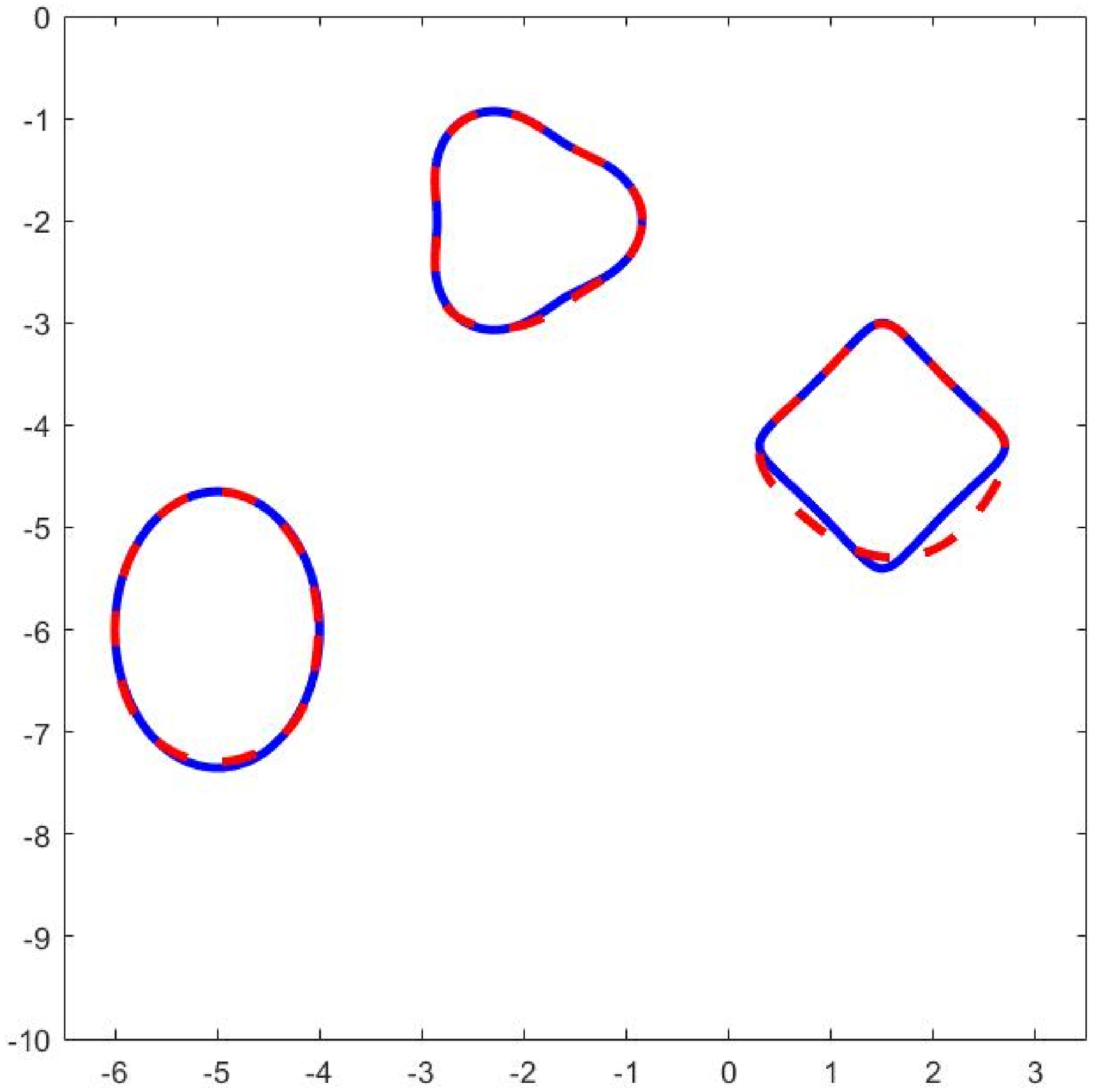}
\end{minipage}%
}%
\centering
\caption{Location and shape reconstruction of multiple obstacles from the phaseless far-field data
with $4\%$ noise in the case $k_+>k_-$: (a) The reconstruction result by Algorithm \ref{A1} at $k^{(1)}_+=30$ and
$k^{(1)}_{-}=k^{(1)}_{+}/2$, (b) The initial curve for Algorithm \ref{A2}, (c) The reconstructed obstacle
by Algorithm \ref{A2}  at $k^{(2)}_+=30$ and $k^{(2)}_{-}=k^{(2)}_{+}/2$.}\label{fig6}
\end{figure}

\textbf{Example 7: Reconstruction of multiple obstacles in the case $k_{+}<k_{-}$.}
%
Consider the inverse problem for reconstructing multiple obstacles in the case $k_{+}<k_{-}$,
where the obstacles are the same as in Example 6. Figure \ref{fig7}(a) presents the imaging result by the direct imaging method with $k^{(1)}_{+}=13$,
whose local maximums are at $(-2.28,-1.25),(-4.94,-5.16),(1.47,-3.13)$, respectively.
Figures \ref{fig7}(b) and \ref{fig7}(c) present the initial curve and the reconstruction result at $k^{(2)}_{+}=13$,
where the solid line represents the exact curve.
From Figure \ref{fig7}(c) it is concluded that the upper part of the shape for all obstacles can be
satisfactorily reconstructed, which is consistent with the results in Example 5.
Further, both the location and shape of the rounded triangle-shaped obstacle are
reconstructed very well.
However, the lower parts of the ellipse-shaped and rounded
square-shaped obstacles are not very accurately reconstructed.

\begin{figure}[htbp]
\centering
\subfigure[]
{
\begin{minipage}[t]{0.36\textwidth}
\centering
\includegraphics[width=\textwidth]{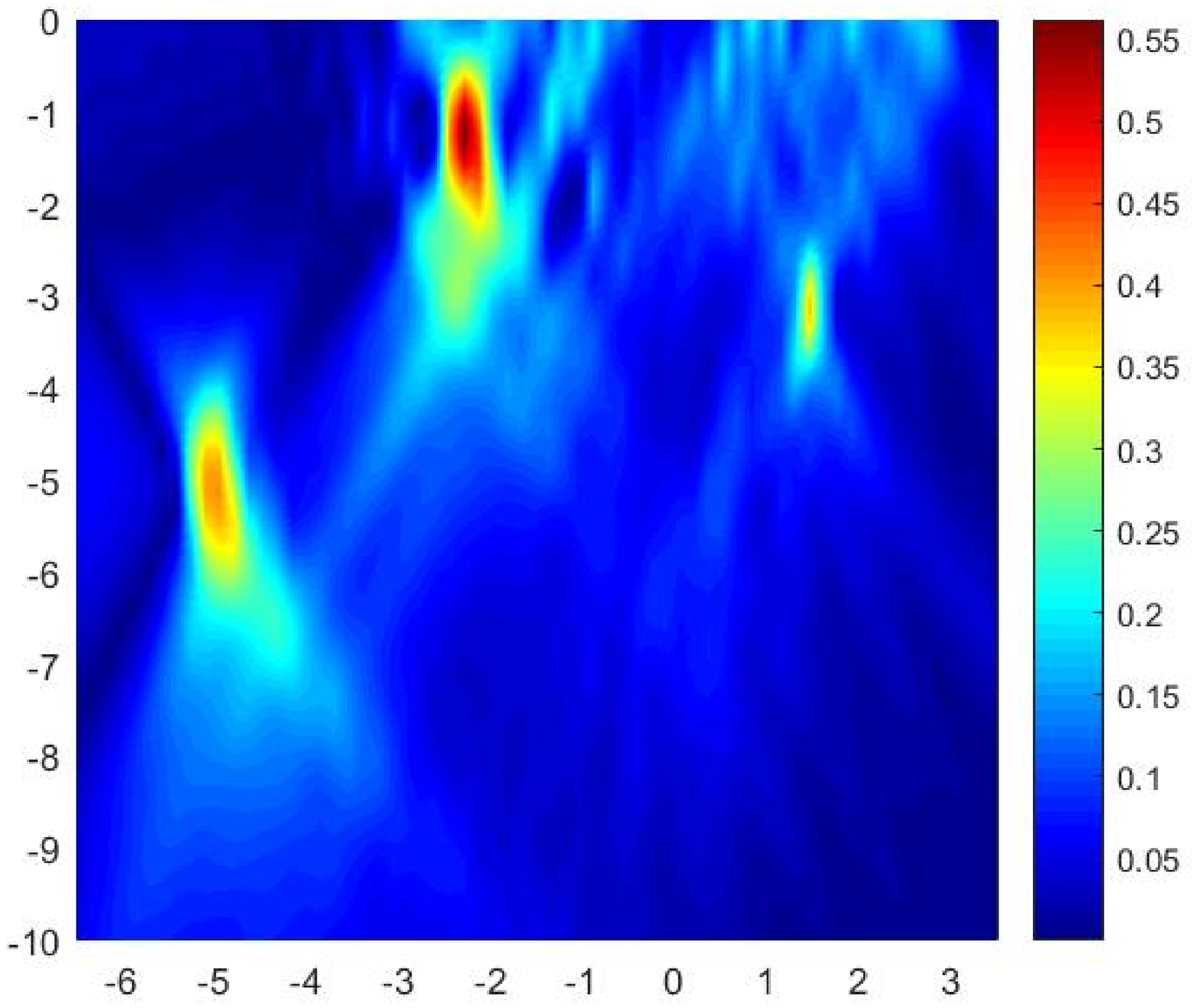}
\end{minipage}%
}%
\hfill
\subfigure[]
{
\begin{minipage}[t]{0.28\textwidth}
\centering
\includegraphics[width=\textwidth]{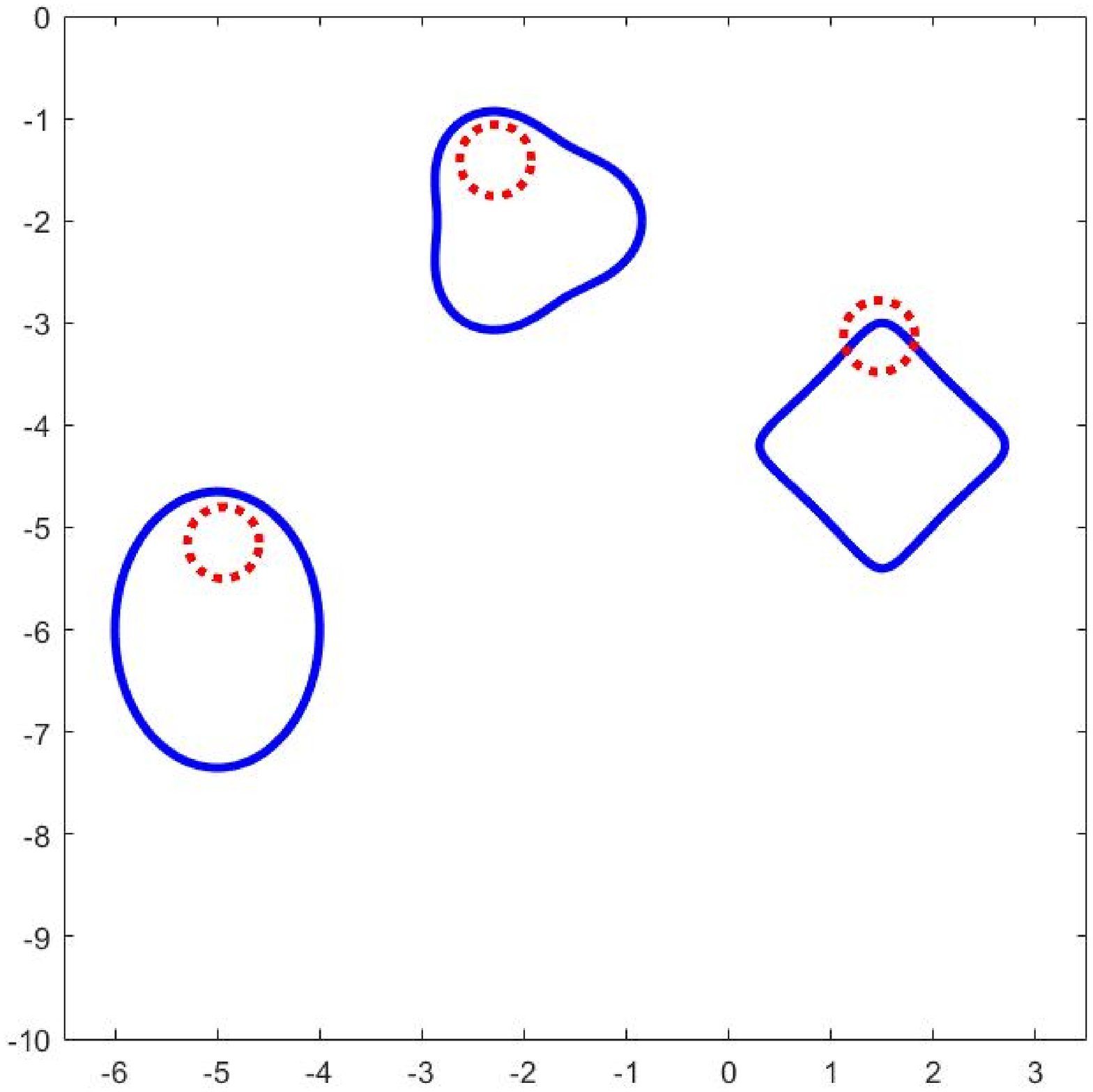}
\end{minipage}%
}
\hfill
\subfigure[]
{
\begin{minipage}[t]{0.28\textwidth}
\centering
\includegraphics[width=\textwidth]{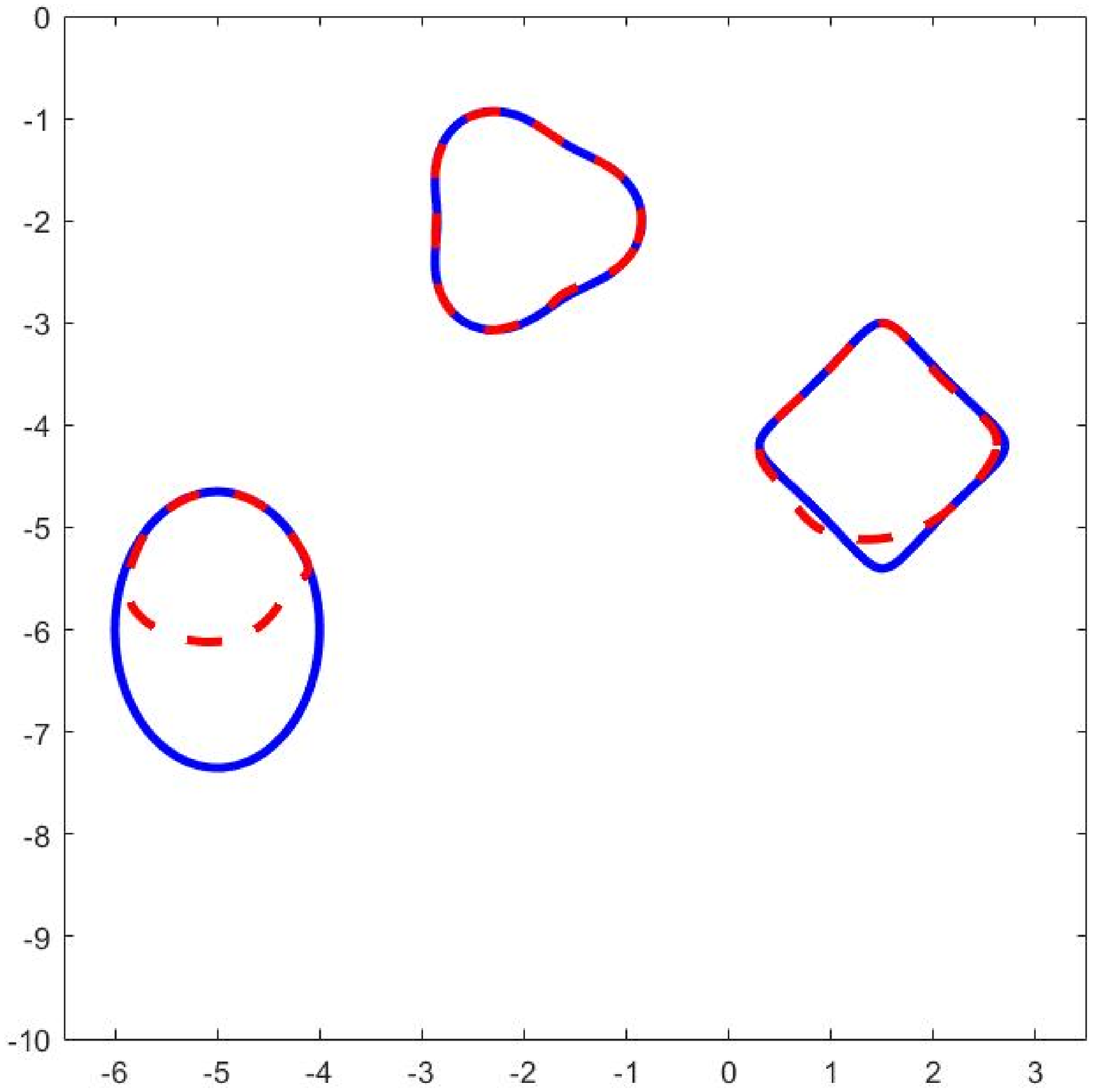}
\end{minipage}%
}%
\centering
\caption{Location and shape reconstruction of multiple obstacles from the phaseless far-field data
with $4\%$ noise in the case $k_+<k_-$: (a) The reconstruction result by Algorithm \ref{A1}  at $k^{(1)}_+=13$ and
$k^{(1)}_{-}=1.45k^{(1)}_{+}$, (b) The initial curve for Algorithm \ref{A2}, (c) The reconstructed obstacle
by Algorithm \ref{A2}  at $k^{(2)}_+=13$ and $k^{(2)}_{-}=1.45k^{(2)}_{+}$.}\label{fig7}
\end{figure}

\section{Conclusions and future work}{\label{sec5}}

In this paper, we have proposed two algorithms to {\color{hw}{image}} buried obstacles in the lower half-space of an unbounded
two-layered medium with only phaseless far-field data. Following the idea of \cite{BLS}, we make use of
superpositions of two plane waves as the incident fields and extend the direct imaging algorithm in \cite{RW}
and the recursive Newton-type iteration method in \cite{BLS} to the two-layered medium problem.
The direct imaging method can determine the location of the buried obstacles, providing some a priori information
for the recursive iteration method. Combining this {\color{hw}{a priori}} information with the recursive Newton-type iteration
method, the location and shape of the extended obstacles buried in the lower half-space can be recovered.

Through various numerical experiments, it has been shown that both two algorithms proposed in this paper are effective not only for the
case $k_{+}>k_{-}$ but also for the case $k_{+}<k_{-}$. However, it is observed that the reconstruction
results for the case $k_{+}> k_{-}$ are better than those for the case $k_{+}<k_{-}$.
In particular, for the case $k_{+}<k_{-}$,
the lower part of the obstacle is not very accurately reconstructed compared with
its upper part.
This may be due to the fact that the phaseless far-field data
are only measured on the upper
unit half-circle.
Therefore, certain improvements still need to be further investigated.

\section*{Acknowledgments}

The work of L. Li and J. Yang is partially supported by the NNSF of China grants {\color{hw}{11961141007}} and 61520106004, and
Microsoft Research of Asia. The work of H. Zhang is supported by the NNSF of China grant 11871466.

\end{document}